\documentclass[a4paper, 12pt, reqno]{amsart}
\usepackage[utf8]{inputenc}

\usepackage{amsmath}
\usepackage{amsfonts}
\usepackage{amssymb}
\usepackage{lipsum}
\usepackage{physics}
\usepackage{esint}

\usepackage{amsthm}
\usepackage{enumerate}
\usepackage{amsrefs}

\usepackage{graphicx}
\usepackage{tikz}

\usepackage{threeparttable}
\usepackage{float} 




\usepackage[titletoc]{appendix}
\usepackage[english]{babel}
\usepackage[utf8]{inputenc}
\usepackage[none]{hyphenat}

\usepackage{geometry}
 \geometry{
 a4paper,
 total={170mm,242.5mm},
 left=22mm,
 top=25mm,
 }

\usepackage{hyperref}
\hypersetup{
    colorlinks=true,
    linkcolor=blue,
    citecolor=blue,
    filecolor=magenta,      
    urlcolor=cyan,
}

\newcounter{foo}

\theoremstyle{plain}

\newtheorem{thm}[foo]{Theorem}
\newtheorem{prop}[foo]{Proposition}
\newtheorem{lem}[foo]{Lemma}

\theoremstyle{definition}

\theoremstyle{remark}
\newtheorem{rem}[foo]{Remark}

\swapnumbers


\newcommand{\Ric}{\text{Ric}}
\newcommand{\Rm}{\text{Rm}}

\newcommand{\RR}{\mathbb{R}}

\usepackage{hyperref}
\usepackage{cleveref}


\numberwithin{foo}{section}
\numberwithin{equation}{section}

\title{Singularity Type for Long Time Chern-Ricci Flows}
\author{Hosea Wondo}

\begin{document}
\maketitle

\begin{abstract}
    We extend some results known for the K\"ahler-Ricci flow to the Chern-Ricci flow regarding the independence of singularity types for long time solutions. Specifically, we show that if a solution to the Chern-Ricci flow exists with uniformly bounded torsion and curvature, then any other solution starting from a initial metric of the same $\partial \bar{\partial}$ class will also exhibit uniform bounds on torsion and curvature. 
\end{abstract}

\section{Introduction}

The Chern-Ricci flow is one of the natural extensions of the K\"ahler-Ricci flow in Hermitian geometry. Rather than evolving the  K\"ahler metric by the Ricci curvature of the Levi-Civita connection, one instead evolves the Hermitian metric by the Chern-Ricci form of the Chern connection. The normalised variant of this geometric flow is given by 

\begin{equation}\label{1CRF}
   \left\lbrace \begin{aligned}
        \pdv{\omega}{t} &= - \Ric(\omega) - \omega, \\
        \omega(0)&= \omega_0.
    \end{aligned}\right.
\end{equation}

The Chern-Ricci flow was first studied by Gill in \cite{gill2010convergence} on manifolds with vanishing first Bott-Chern class. A general study of the flow was given by Tosatti and Weinkove in \cite{tosatti2015evolution} where the authors conjecture a non-K\"ahler variant of the analytic minimal model program for complex surfaces. Naturally, one hopes such a flow could produce classification results for Hermitian manifolds, particularly for complex VII surfaces \cite{tosatti2013chern}. Furthermore, flows in the non-K\"ahler setting are motivated by a growing interest in developing a theory for non-K\"ahler Hermitian manifolds. 

As with the K\"ahler-Ricci flow, the Chern-Ricci flow's maximal time of existence is characterised by a particular cohomology class. In \cite{tosatti2015evolution} it was shown that solutions to the equation \eqref{1CRF} exists up to time 
\begin{equation}\label{1cohomologyT}
    T=\sup \left\{t>0 \mid \exists \psi \in C^{\infty}(M) \text { with } e^{-t}\omega_0+(1-e^{-t}) \operatorname{Ric}\left(\omega_0\right)+\sqrt{-1} \partial \bar{\partial} \psi>0\right\}
\end{equation}
This is the analgous result to the maximal time of existence result for the K\"ahler setting in \cite{tian2006kahler}. Since we are not necissarily in the K\"ahler setting, the K\"ahler forms do not generate a cohomology class. Instead, \eqref{1cohomologyT} is considered to be the path in the Bott-Chern cohomology. This cohomology is defined by 
\begin{equation}
    H_{B C, \mathbb{R}}^{1,1}:=\frac{\left\{\operatorname{Ker} d: \Lambda_{\mathbb{R}}^{1,1} \rightarrow \Lambda_{\mathbb{R}}^{2,2}\right\}}{\left\{\sqrt{-1} \partial \bar{\partial} f \mid f \in C^{\infty}(X)\right\}} .
\end{equation}

Locall, the Chern-Ricci form is given by 
\begin{equation}
    \Ric(\omega) =  -\sqrt{-1} \partial \bar{\partial} \log \det g_{i \bar{j}} 
\end{equation}
but does not conicide, unless $\omega$ is K\"ahler, with the Levi-Civita Ricci curvature nor the second Ricci curvature of the Chern connection.

In \cite{tosatti2015collapsing} it was shown that $T = \infty $ is characterized by the canonical line bundle of $K_X$ being numerically effective. That is, for all $\epsilon >0$, there exists $\psi_\varepsilon \in C^\infty(X)$ such that 
$$-\operatorname{Ric}\left(\omega_0\right)+\sqrt{-1} \partial \bar{\partial} \psi_{\varepsilon}>-\varepsilon \omega_0 . $$
As with the K\"ahler case, the nef condition is independent of time $\omega_0$ and the limiting class is also independent on the starting metric. With this in mind, it is natural to study which properties of the flow hold for different starting metrics. For the K\"ahler-Ricci flow, there is strong evidence that curvature boundedness is preserved for different starting metrics \cite{tosatti2015infinite} and it was conjectured by Tosatti that the curvature type is independent of the inital metric \cite{tosatti2018kawa}. This turns out to be true in the K\"ahler case \cites{zhang2020infinite,wondo2023independence}.  

As the Chern-Ricci flow is a natural extension of the K\"ahler-Ricci flow, we can ask whether the same results holds with the prsenece of torsion. In this paper, we prove a partial result to this, and show the following type-independence result for curvature and torsion. 

\begin{thm}\label{T1}
Let $\widetilde{\omega}(t)$ be a solution to the normalised Chern-Ricci flow \eqref{1CRF} on a compact Hermitian manifold $(X,\omega_0)$ with numerically effective $K_X$. Suppose that 
\begin{equation}\label{T1Assumption}
    ||\widetilde{T}(t)||_{\widetilde{\omega}(t)}^2 + ||\text{{\normalfont Rm}}(\widetilde{\omega}(t))||^2_{\widetilde{\omega}(t)} \leqslant C_0. 
\end{equation}
Then for any other solution to the Chern-Ricci flow, $\omega(t)$ starting from a metric $\omega_0$ satisfying $\omega_0 = \widetilde{\omega}_0  + \sqrt{-1} \partial \bar{\partial} \varphi_0$ for some $\phi_0 \in C^\infty(X,\RR)$, there exists a constant $C \geqslant 0$ such that
\begin{equation}\label{ThmRmbdd}
    ||T(t)||_{\omega(t)}^2 + ||\text{{\normalfont Rm}}  (\omega(t))||^2_{\omega(t)}\leqslant C
\end{equation}
for all $t \geqslant 0$. 
\end{thm}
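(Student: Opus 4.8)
The plan is to compare the two flows at the level of their potentials and then run a parabolic bootstrap (Evans--Krylov / Calabi-type estimates adapted to the Hermitian setting) anchored by the already-assumed bounds on $\widetilde{\omega}(t)$. Since $K_X$ is nef, both flows exist for all time, and since $\omega_0 = \widetilde{\omega}_0 + \sqrt{-1}\,\partial\bar\partial\varphi_0$ lies in the same $\partial\bar\partial$-class, the reference forms $\wh\omega_t = e^{-t}\omega_0 + (1-e^{-t})\Ric(\omega_0)$ and $\wt{\wh\omega}_t$ differ by $\sqrt{-1}\,\partial\bar\partial(e^{-t}\varphi_0)$, so both $\omega(t)$ and $\widetilde{\omega}(t)$ can be written as $\wh\omega_t + \sqrt{-1}\,\partial\bar\partial u$ for potentials solving the corresponding parabolic complex Monge--Amp\`ere equations. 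The first step is to establish uniform $C^0$ control: by the maximum principle applied to $u - \wt u$ (suitably normalized), $\|u - \wt u\|_{C^0} \leqslant C(\|\varphi_0\|_{C^0} + 1)$ for all $t$, which combined with the nef normalization gives two-sided bounds relating $\omega(t)$ and $\widetilde{\omega}(t)$ up to additive $\partial\bar\partial$ of a bounded function.

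Next I would upgrade to a metric equivalence $C^{-1}\widetilde{\omega}(t) \leqslant \omega(t) \leqslant C\widetilde{\omega}(t)$. This is the key use of hypothesis \eqref{T1Assumption}: because $\widetilde{\omega}(t)$ has uniformly bounded torsion and curvature, it is a uniformly controlled background geometry, and one can run a Chern--Ricci analogue of the parabolic Schwarz lemma / Aubin--Yau second-order estimate for the quantity $\log \tr_{\widetilde{\omega}} \omega$ (and symmetrically $\log \tr_\omega \widetilde{\omega}$). The extra torsion terms that appear — products of $\widetilde T$ with first derivatives of the potential — are exactly the terms controlled by the assumed bound on $\|\widetilde T\|_{\widetilde\omega}$, following the computations of Tosatti--Weinkove in \cite{tosatti2015evolution}; the curvature bound on $\widetilde{\omega}$ handles the $\Rm(\widetilde\omega)$ term that shows up after commuting derivatives. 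This yields $\omega(t) \sim \widetilde{\omega}(t)$ uniformly in $t$, hence in particular $\|\Rm(\omega(t))\|_{\omega(t)}$ bounded is \emph{equivalent} to a bound in the fixed-background norm of $\widetilde\omega(t)$.

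With the uniform equivalence in hand, the final step is the higher-order estimate. Using $\widetilde{\omega}(t)$ as a uniformly equivalent background metric with uniformly bounded geometry, I would run the Calabi $C^3$-type estimate for $S = |\nabla_{\widetilde g} g|^2_{\widetilde g}$ (the Phong--Sturm--Chern--Ricci analogue), where again the torsion of the background enters only through terms already bounded by \eqref{T1Assumption}; this gives a uniform bound on the first covariant derivative of $g$ with respect to $\widetilde\nabla$, and then $\|T(t)\|_{\omega(t)}$ is controlled since $T$ is built from $\partial g$. For the full curvature bound one differentiates once more: bounding $\Rm(\omega)$ reduces to bounding second covariant derivatives of $g$ with respect to $\widetilde\nabla$, for which one sets up the evolution of $|\nabla^2_{\widetilde g} g|^2$ and absorbs the now-controlled lower-order and background-torsion terms. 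Alternatively, once the metric equivalence and the $C^3$ bound are established, local parabolic Schauder/Evans--Krylov estimates on unit-size parabolic balls (in $\widetilde g$-coordinates, which are uniformly controlled) give all higher derivatives, in particular \eqref{ThmRmbdd}.

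\medskip

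The main obstacle I anticipate is the second-order metric equivalence step in the presence of torsion: in the K\"ahler case the Aubin--Yau inequality for $\log\tr_{\widetilde\omega}\omega$ closes cleanly, but in the Hermitian setting the commutator terms produce gradient-of-potential factors paired with torsion and with $\bar\partial$-torsion of the background, and these are not obviously absorbable by the good negative term $-\,C^{-1}\tr_\omega\widetilde\omega\,|\partial\log\tr_{\widetilde\omega}\omega|^2$. Making this work requires carefully tracking which torsion norms appear and exploiting that \eqref{T1Assumption} bounds precisely $\|\widetilde T\|_{\widetilde\omega}$ and $\|\Rm(\widetilde\omega)\|_{\widetilde\omega}$ (rather than, say, derivatives of torsion), possibly after also deriving an a priori bound on $\|\nabla\widetilde T\|$ from the curvature bound via the flow equation. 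This is where the bulk of the technical work lies; everything downstream is a standard bootstrap on a uniformly controlled background.
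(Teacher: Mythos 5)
Your overall architecture — $C^0$ control of the potential, trace/metric equivalence via a parabolic Schwarz-lemma argument on $\log\tr_{\widetilde\omega}\omega$, a Calabi-type bound on $S=|\widetilde\nabla g|^2$, and then a maximum principle for curvature — matches the paper's strategy step for step. However, you flag the metric-equivalence step as the place ``where the bulk of the technical work lies'' and worry that the torsion--gradient cross terms are not absorbable, but you do not identify the mechanism that makes the step close. The paper's crucial observation is that the hypothesis $\omega_0=\widetilde\omega_0+\sqrt{-1}\,\partial\bar\partial\varphi_0$ propagates under the flow to give $\omega(t)-\widetilde\omega(t)=\sqrt{-1}\,\partial\bar\partial\bigl(e^{-t}\varphi_0+v\bigr)$, hence $\partial\omega(t)=\partial\widetilde\omega(t)$ for all $t$; consequently the torsion tensors with all lower indices coincide, $T_{ki\bar j}=\widetilde T_{ki\bar j}$. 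This is not merely a convenience — it is what makes every torsion term that appears in the evolution of $\tr_{\widetilde\omega}\omega$ a \emph{background} quantity already bounded by \eqref{T1Assumption}, and it is also what immediately gives the torsion half of \eqref{ThmRmbdd} from the trace bound. The remaining problematic gradient term $\frac{2}{(\tr_{\widetilde\omega}\omega)^2}\operatorname{Re}(g^{\bar q k}\widetilde T^{\,i}_{ik}\partial_{\bar q}\tr_{\widetilde g}g)$ is then killed by the Phong--Sturm device of adding $1/(u^\sharp+K)$ to $\log\tr_{\widetilde\omega}\omega - Au^\sharp$: at a maximum the first-order condition lets you trade $\partial\tr_{\widetilde\omega}\omega$ for $\partial u$, and the reciprocal term supplies a good $-|\nabla u|^2_g/(u+K)$ that absorbs it. Your proposal also under-specifies the $C^0$ step: the paper needs not only $|v|_{L^\infty}\leqslant C$ but also $|\partial v/\partial t|_{L^\infty}\leqslant C$ (i.e.\ a bounded volume ratio), obtained by a sandwich argument with scaled metrics $\lambda_0^\pm\widetilde\omega_0$ and separate maximum principles for $\partial_t u\pm Au$; this is used in the trace estimate. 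Finally, a small correction: the bound on $\|\widetilde\nabla\widetilde T\|$ follows from the Bianchi-type symmetries of $\Rm$ together with the assumed curvature bound, not from the flow equation itself.
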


When $\partial \omega_0 = \partial \widetilde{\omega}_0 =  0$, that is both metrics on $X$ is K\"ahler, then the flow equation \eqref{1CRF} is simply the K\"ahler-Ricci flow and we recover the type-independence type for the K\"ahler case \cites{wondo2023independence,zhang2020infinite}. Long time solutions to the K\"ahler-Ricci flow have been extensively studied, particularly in the context of the final step in the analytic minimal model program \cite{song2006thekahler} (for a recent survey on immortal K\"ahler-Ricci flows, see \cite{tosatti2024immortal}). 

We also mention an alternative flow due to Streets and Tian \cites{tian2011hermitian,streets2010parabolic,streets2020pluriclosed}. The pluriclosed flow is a subset of a larger class of flows called Hermitian flows. The pluriclosed flow evolves the Hermitian metric by 
\begin{equation}\label{1pluriclosed}
    \frac{\partial}{\partial t} \omega=-S+Q
\end{equation}
where $S$ is the second Ricci curvature of the Chern conection and $Q$ is a quadratic term involving torsion
$$
    Q_{i \bar{j}}=g^{\bar{l} k} g^{\bar{q} p} T_{i k \bar{p} \overline{ }} T_{\bar{j} \bar{l} q}.
$$
The flow \eqref{1pluriclosed} is called the Pluriclosed flow. It manifests as a gradinet flow of the lowest eigenvalue of a certain Schr \"odinger operator \cite{streets2013regularity} and has links to Hitchen's generallised geoemtry \cites{hitchin2003generalized,streets2012generalized,streets2016pluriclosed}. Unlike the Chern-Ricci flow, the Pluriclosed flow does not exhibit a reduction to a scalar Monge-Ampere equation.

This paper is orgainsed as follows. In Section 2, we reduce the two flows begining at different metrics to a Monge-Ampere equation. We derive $L^\infty(X)$ estiamtes fo the difference in the potential function obtained from this reduction. In Section 3, we derive metric equivalence between the solutions $\omega(t)$ and $\widetilde{\omega}$. Here, we make the assumption that the inital metrics reside in the same $\partial \bar{\partial}$ in order to deal with the torsion terms we need to contend with. Finally, in Section 4, we derive higher order estimtes for the flow $\omega(t)$. The metric equivalence allows us to trasnfer torsion and curvature bounds from $\widetilde{\omega}(t)$ to the $\omega(t)$. \\

\textbf{Acknowledgments.} The author would like to thank his advisor, Zhou Zhang for countless interesting discussions and Valentino Tosatti for his suggestions and interest in this work.  

\section{Monge-Ampere Equations}

We begin by relating two solutions to the flow through a Monge-Ampere equation. Define a function $v:X \times [0,\infty) \rightarrow \RR$ satisfying  
\begin{equation}\label{2v}
    v(p,t) := e^{-t} \int^t_0 e^s \log\left( \frac{\omega^n}{\widetilde{\omega}^n}\right).
\end{equation}
We suppress the $p$ and denote $v(p,t)$ as $v(t)$ for notational convenience. This function solves the following parabolic Monge-Ampere equation given by 

\begin{equation}\label{2MA}
    \left\lbrace \begin{aligned}
        \pdv{v}{t} &= \log \left( \frac{\omega^n}{\widetilde{\omega}^n} \right) - v(t) \\
        v(0) &= 0. \\
    \end{aligned}\right.
\end{equation}
Then 
\begin{equation}
\Phi := \omega(t)-\widetilde{\omega}(t)-e^{-t}\left(\omega_0-\widetilde{\omega}_0\right)-\left(1-e^{-t}\right)\left(\Ric(\omega_0)-\Ric(\widetilde{\omega}_0)\right) - \sqrt{-1} \partial \bar{\partial} v.
\end{equation}

Taking a time derivative of $\Phi$ and using the flow equation \eqref{1CRF} and the Monge-Ampere equation \eqref{2MA}, we obtain 
\begin{equation*}
    \begin{aligned}
        \pdv{\Phi}{t} &= \frac{\partial \omega}{\partial t}-\frac{\partial \tilde{\omega}}{\partial t}+e^{-t}\left(\omega_0-\widetilde{\omega}_0\right)-e^{-t}\left(\operatorname{Ric}\left(\omega_0\right)-\Ric(\widetilde{\omega}_0 \right)) -\sqrt{1} \partial \bar{\partial}\left(\frac{\partial v}{\partial t}\right) \\
        &= -\omega(t)+\widetilde{\omega}(t)+e^{-t}\left(\omega_0 -\widetilde{\omega}_0\right)-e^{-t}\left(\operatorname{Ric}\left(\omega_0\right)-\Ric(\widetilde{\omega}_0 \right)) + \sqrt{-1}\partial \bar{\partial} v \\
        &= - \Phi + \sqrt{-1} \partial \bar{\partial} \log \left( \frac{\omega_0^n}{\widetilde{\omega}_0^n} \right).
\end{aligned}
\end{equation*}
Therefore, the above implies 
$$
    \Phi(t) = \sqrt{-1} \partial \bar{\partial} \log \left( \frac{\omega_0^n}{\widetilde{\omega}_0^n} \right) + Fe^{-t}.
$$
for some time independent $(1,1)$ form $F$. To recover $F$, we note that the initial conditions enforce $\Phi_0 = 0$, which implies $F= - \sqrt{-1} \partial \bar{\partial} \log \left( \omega_0^n / \widetilde{\omega}_0^n \right)$. Thus, we have 
$$
    \Phi(t) = (1-e^{-t}) \sqrt{-1} \partial \bar{\partial} \log \left( \frac{\omega_0^n}{\widetilde{\omega}_0^n} \right)
$$
which implies 
\begin{equation}\label{2Cohomology}
    \omega(t) = \widetilde{\omega}(t) + e^{-t}\left(\omega_0-\widetilde{\omega}_0\right) +  \sqrt{-1} \partial \bar{\partial} v.
\end{equation}

\begin{rem}
    In the K\"ahler case, the equation \eqref{2Cohomology} follows immediately from the cohomology classes induced by the K\"ahler form. 
\end{rem}

Consider two Chern-Ricci flows, starting from $\omega_0$ and $\widetilde{\omega}_0$. We first show that the potential that arises out of such scaling is equivalent. That is, if $\omega_0 = \lambda \omega_0$ for some $\lambda>0$ and $\omega(t), \widetilde{\omega}(t)$ solve \eqref{1CRF}, then the potential $v$ defined by \eqref{2v} are uniformly bounded in time. Using this result, we can show the general case.

\begin{prop}\label{2prop1}
    Let $\omega(t)$ and $\widetilde{\omega}(t)$ be two solutions to \eqref{1CRF} with initial conditions related by 
    \begin{equation}\label{2prop1A}
        \omega_0 = \lambda_0 \widetilde{\omega}_0
    \end{equation}
    for some $\lambda_0>0$. Suppose that 
    \begin{equation}\label{2prop1A2}
       -C \widetilde{\omega}  \leqslant \Ric(\widetilde{\omega}) \leqslant C \widetilde{\omega}.
    \end{equation}
    Then $v$ defined in \eqref{2v} satisfies 
    \begin{equation}
        |v|_{L^\infty(X)} + \left|\pdv{v}{t} \right|_{L^\infty(X)} \leqslant C
    \end{equation}
    for some $C>0 $ independent of time. 
\end{prop}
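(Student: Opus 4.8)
The plan is to avoid a direct maximum‑principle attack on \eqref{2MA} and instead reduce everything to a space–time‑uniform bound on the volume ratio $\log\bigl(\omega(t)^n/\widetilde\omega(t)^n\bigr)$, which I would then extract from the scaling homogeneity of the flow. Such a bound suffices: if $\bigl|\log(\omega(t)^n/\widetilde\omega(t)^n)\bigr|\le C_1$ for all $t$, then \eqref{2v} gives $|v(t)|\le e^{-t}\int_0^t e^{s}C_1\,ds\le C_1$, and then \eqref{2MA} gives $|\partial_t v|=\bigl|\log(\omega^n/\widetilde\omega^n)-v\bigr|\le 2C_1$, which is the assertion.

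To produce the volume‑ratio bound I would pass to the unnormalised Chern–Ricci flow $\partial_s\eta=-\Ric(\eta)$. Writing $\eta(s),\widetilde\eta(s)$ for its solutions starting from $\omega_0$ and $\widetilde\omega_0$, the scale invariance $\Ric(c\eta)=\Ric(\eta)$ gives $\omega(t)=e^{-t}\eta(e^t-1)$ and $\widetilde\omega(t)=e^{-t}\widetilde\eta(e^t-1)$, and, by uniqueness of solutions, the homogeneity $\eta(s)=\lambda_0\,\widetilde\eta(s/\lambda_0)$. Substituting and re‑expressing in normalised time yields the pointwise identity
\[
    \omega(t)=\bigl(1+(\lambda_0-1)e^{-t}\bigr)\,\widetilde\omega(t'),\qquad t':=\log\frac{e^t+\lambda_0-1}{\lambda_0},
\]
and one checks that $|t-t'|\le|\log\lambda_0|$ for all $t\ge 0$ (with $t'\le t$ when $\lambda_0\ge1$ and $t'\ge t$ when $\lambda_0\le1$), and that $t'$ lies in the interval of existence of $\widetilde\omega$ whenever $\omega(t)$ is defined. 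Thus the two normalised flows are, up to an exponentially small conformal factor, time translates of each other by a bounded amount.

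Raising the identity to the $n$‑th power and using that along \eqref{1CRF} one has $\partial_\tau\log\widetilde\omega(\tau)^n=-R(\widetilde\omega(\tau))-n$, where $R(\widetilde\omega)=\mathrm{tr}_{\widetilde\omega}\Ric(\widetilde\omega)$ is the Chern scalar curvature, I obtain
\[
    \log\frac{\omega(t)^n}{\widetilde\omega(t)^n}=n\log\bigl(1+(\lambda_0-1)e^{-t}\bigr)+\int_{t'}^{t}\bigl(R(\widetilde\omega(\tau))+n\bigr)\,d\tau .
\]
Assumption \eqref{2prop1A2} forces $|R(\widetilde\omega)|\le nC$ pointwise, so the integrand is bounded in absolute value by $(C+1)n$; since $[t',t]$ has length at most $|\log\lambda_0|$, the integral is bounded by $(C+1)n|\log\lambda_0|$, and the logarithmic term is bounded by $n|\log\lambda_0|$. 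Hence $\bigl|\log(\omega^n/\widetilde\omega^n)\bigr|\le (C+2)n|\log\lambda_0|=:C_1$ uniformly, and the reduction of the first paragraph completes the proof.

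The step I expect to be the real obstacle — and the reason for detouring through the unnormalised flow — is that a direct maximum principle on \eqref{2MA} does not give a time‑uniform bound: at a spatial maximum of $v$ one is reduced to bounding $\bigl(\widetilde\omega(t)+e^{-t}(\lambda_0-1)\widetilde\omega_0\bigr)^n/\widetilde\omega(t)^n$ from above, but \eqref{2prop1A2} only yields $\widetilde\omega(t)\ge e^{-(C+1)t}\widetilde\omega_0$, so this quantity can grow like $e^{nCt}$ as $t\to\infty$. Realising the two solutions as near‑time‑translates of one another confines the comparison to a window of bounded length, on which the bounded Chern scalar curvature (which is exactly what \eqref{2prop1A2} provides after taking a trace) controls the change in the volume form. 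The remaining points — verifying the sign of $t-t'$ and the nesting of the maximal existence intervals in the two cases $\lambda_0\ge1$ and $\lambda_0\le1$ — are routine.
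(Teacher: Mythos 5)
Your proposal is correct and follows essentially the same route as the paper: both exploit the exact scaling homogeneity $\omega(t)=\bigl(1+(\lambda_0-1)e^{-t}\bigr)\widetilde\omega(\tau)$ with a bounded time shift $|t-\tau|\le|\log\lambda_0|$, and both use the assumed Chern--Ricci bound \eqref{2prop1A2} to control how much $\widetilde\omega$ can change over a time window of bounded length. The only cosmetic difference is that you trace the Ricci bound to bound the Chern scalar curvature and hence $\partial_\tau\log\widetilde\omega^n$ directly, whereas the paper derives metric distortion estimates of the form $\partial_\tau\bigl(e^{-(C-1)\tau}\widetilde\omega\bigr)\le 0$ and $\partial_\tau\bigl(e^{(C+1)\tau}\widetilde\omega\bigr)\ge 0$ and then takes determinants; these are two faces of the same estimate. (Your version also makes the $|\partial_t v|$ bound, which is asserted but not explicitly derived in the paper's proof, an immediate consequence of the uniform bound on $\log(\omega^n/\widetilde\omega^n)$, and your sign determination $\tau\le t$ when $\lambda_0\ge 1$ is the correct one.)
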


\begin{proof}
    For some $\lambda_0>0$, consider the scaled initial metric 
    $$ \omega_0 = \lambda_0 \widetilde{\omega}_0.$$
    If $\widetilde{\omega}(t)$ solves the Chern-Ricci flow, then $\omega(t) = \lambda(t) \widetilde{\omega}(\tau)$ solves the Chern-Ricci flow where 
    \begin{equation}
        \left\lbrace \begin{aligned}
            \lambda(t) &= 1 + (\lambda_0 - 1)e^{-t} \\
            \tau(t) &= t + \ln \left( \frac{\lambda(t)}{\lambda_0} \right).
        \end{aligned}\right.
    \end{equation}
    Note that $\tau = t + o(t)$ and $\tau \rightarrow \infty$ as $t \rightarrow \infty$. The geometries of these two flows are related as follows:
    \begin{equation}
        \omega(t) = (1+(\lambda_0-1)e^{-t})\widetilde{\omega}(\tau) 
    \end{equation}
    \begin{equation}
        T(t) = \widetilde{T}(\tau)
    \end{equation}
    \begin{equation}
        R(t) = \frac{\widetilde{R}(\tau)}{1+(\lambda_0-1)e^{-t}} 
    \end{equation}
    \begin{equation}
        \Ric(\omega(t)) = \Ric(\widetilde{\omega}(\tau))
    \end{equation}
    \begin{equation}
        \Rm(\omega(t)) = (1+(\lambda_0-1)e^{-t})\Rm(\widetilde{\omega}(\tau)).
    \end{equation}
    In particular, a uniform bound on scalar curvature of $\widetilde{\omega}(\tau)$ implies a uniform bound on scalar curvature of $\omega(t)$. The potential defined as in \eqref{2v} is 
    \begin{equation}\label{2vscaled}
    \begin{aligned}
        v &= e^{-t} \int_0^t e^s \log \left( \frac{\omega(t)^n}{\widetilde{\omega}(t)^n} \right) + e^{-t} \log \lambda_0^n \\
        &= e^{-t} \int_0^t e^s \log \lambda(s) ds + e^{-t} \log \lambda_0^n + e^{-t} \int_0^t e^s \log \left( \frac{\widetilde{\omega}(\tau(s))^n}{\widetilde{\omega}(s)^n} \right)ds.
    \end{aligned}
    \end{equation}
    The first two terms in the last line above are bounded. To estimate the last term, we apply the Ricci curvature bound \eqref{2prop1A2} on the Chern-Ricci flow to obtain 
    $$
        \pdv{}{\tau} \left( e^{-(c_0-1)\tau} \widetilde{\omega}(\tau)\right) \leqslant 0
    $$
    and 
    $$
        \pdv{}{\tau} \left( e^{(c_0+1)\tau} \widetilde{\omega}(\tau)\right) \geqslant 0.
    $$
    Taking cases, if $\lambda_0>1$, then $\tau>t$ for all $t \geqslant 0$. On the other hand, if $0<\lambda_0<1$, then $\tau <t$ for all $t \geqslant 0$. Combining this with the two estimates above, we obtain distortion estimates on $\widetilde{\omega}$ which yields a bound on the volume ratio $\widetilde{\omega}(\tau)^n/\widetilde{\omega}(t)^n$.
\end{proof}

\begin{prop}\label{2prop2}
    The conclusion in Proposition \ref{2prop1} holds without assumption \eqref{2prop1A}.
\end{prop}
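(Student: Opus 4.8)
The plan is to bracket the initial metric $\omega_0$ between two rescalings of $\widetilde\omega_0$, apply Proposition~\ref{2prop1} to the corresponding flows, and then transport the resulting estimates back to $v$; throughout, the Ricci bound \eqref{2prop1A2} on $\widetilde\omega(t)$ is retained. Since $X$ is compact, one first fixes constants $\lambda_-,\lambda_+>0$ with $\lambda_-\widetilde\omega_0\leqslant\omega_0\leqslant\lambda_+\widetilde\omega_0$ and lets $\omega_\pm(t)$ be the solutions of \eqref{1CRF} starting from $\lambda_\pm\widetilde\omega_0$. Each pair $(\omega_\pm,\widetilde\omega)$ satisfies \eqref{2prop1A} (with scaling factor $\lambda_\pm$) and \eqref{2prop1A2}, so Proposition~\ref{2prop1} gives
\[
    |v_\pm|_{L^\infty(X)}+\big|\partial_t v_\pm\big|_{L^\infty(X)}\leqslant C,\qquad v_\pm:=e^{-t}\int_0^t e^{s}\log\!\left(\frac{\omega_\pm^n}{\widetilde\omega^n}\right)ds;
\]
moreover, the scaling relations used in its proof, together with \eqref{2prop1A2}, give $|R(\omega_\pm)|\leqslant C$ and $-C\omega_\pm\leqslant\Ric(\omega_\pm)\leqslant C\omega_\pm$.

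Next I would set $w_\pm:=v-v_\pm=e^{-t}\int_0^t e^{s}\log(\omega^n/\omega_\pm^n)\,ds$. Repeating the computation leading to \eqref{2Cohomology} with the pair $(\omega,\omega_\pm)$ in place of $(\omega,\widetilde\omega)$ shows that $w_\pm$ solves \eqref{2MA} with $\widetilde\omega$ replaced by $\omega_\pm$, and that
\[
    \omega(t)=\omega_\pm(t)+e^{-t}\big(\omega_0-\lambda_\pm\widetilde\omega_0\big)+\sqrt{-1}\,\partial\bar\partial w_\pm .
\]
At a spatial maximum of $w_+(\cdot,t)$ one has $\sqrt{-1}\,\partial\bar\partial w_+\leqslant 0$, and since $\omega_0-\lambda_+\widetilde\omega_0\leqslant 0$ the identity forces $\omega\leqslant\omega_+$ there, so $\log(\omega^n/\omega_+^n)\leqslant 0$; inserting this into the evolution equation for $w_+$ and using $w_+(\cdot,0)=0$, the maximum principle gives $w_+\leqslant 0$. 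Symmetrically, using $\omega_0-\lambda_-\widetilde\omega_0\geqslant 0$ at a spatial minimum of $w_-$ gives $w_-\geqslant 0$. Hence $v_-\leqslant v\leqslant v_+$, so $|v|_{L^\infty(X)}\leqslant C$, and in particular $|w_\pm|_{L^\infty(X)}\leqslant C$.

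To bound $\partial_t v=\partial_t v_-+\partial_t w_-$ it is enough to bound $\partial_t w_-$. Differentiating the Monge--Amp\`ere equation for $w_-$ in time, using the displayed identity for $\partial_t\omega$ and the relation $\Delta_\omega w_-=n-\operatorname{tr}_\omega\omega_--e^{-t}\operatorname{tr}_\omega(\omega_0-\lambda_-\widetilde\omega_0)$, one finds
\[
    \partial_t(\partial_t w_-)=\Delta_\omega\big(\partial_t w_-+w_-\big)-\operatorname{tr}_\omega\Ric(\omega_-)+R(\omega_-)-\partial_t w_- .
\]
I would then run the maximum principle on $\partial_t w_-\mp Aw_-$ for a sufficiently large constant $A$: the substitution generates a term $(A\pm1)\Delta_\omega w_-$, whose $\operatorname{tr}_\omega\omega_-$-part (coefficient of size $\sim A$) absorbs $-\operatorname{tr}_\omega\Ric(\omega_-)$ once $A>C$, using $-C\omega_-\leqslant\Ric(\omega_-)\leqslant C\omega_-$, while the remaining $e^{-t}\operatorname{tr}_\omega(\omega_0-\lambda_-\widetilde\omega_0)$-part has a favourable sign precisely because $\omega_0-\lambda_-\widetilde\omega_0\geqslant 0$. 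Combined with $|w_-|_{L^\infty(X)}\leqslant C$ and $|R(\omega_-)|\leqslant C$, this yields $|\partial_t w_-|_{L^\infty(X)}\leqslant C$, and hence $|\partial_t v|_{L^\infty(X)}\leqslant C$.

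The main obstacle is this last estimate. The trace $\operatorname{tr}_\omega\omega_-$ (equivalently $\operatorname{tr}_\omega\widetilde\omega$) is not a priori bounded --- the flow $\widetilde\omega(t)$ may collapse as $t\to\infty$ --- so a maximum principle applied naively to $\partial_t v$ leaves an uncontrolled term behind; one must use the $\Delta_\omega w_-$ term produced by testing against $\partial_t w_-\mp Aw_-$, together with the two-sided Ricci bound \eqref{2prop1A2} and the $L^\infty$ bound on $w_-$ already obtained, to dispose of it, and it is important to run this argument with the \emph{rescaled} flow $\omega_-$ rather than with $\widetilde\omega$ directly, since it is the sign $\omega_0-\lambda_-\widetilde\omega_0\geqslant 0$ that controls the last unsigned trace term. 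Finally, I would remark that the bracketing step uses only the fixed-metric inequality $\lambda_-\widetilde\omega_0\leqslant\omega_0\leqslant\lambda_+\widetilde\omega_0$, with no relation between the $\partial\bar\partial$-classes of $\omega_0$ and $\widetilde\omega_0$, so this portion of the argument does not appeal to the extra hypothesis in Theorem~\ref{T1}.
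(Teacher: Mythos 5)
Your proposal is correct and follows essentially the same strategy as the paper: bracket $\omega_0$ between rescalings $\lambda_\pm\widetilde\omega_0$, use the sign of $\omega_0-\lambda_\pm\widetilde\omega_0$ at spatial extrema to control the auxiliary potentials, invoke Proposition~\ref{2prop1} on the scaled pair(s), and then run a maximum principle on $\partial_t(\cdot)\mp A(\cdot)$ against the rescaled flow $\omega_-$ for the time-derivative bound. Your $w_\pm$ are exactly the paper's $u$ and $-v$ (with $\psi=v_--v_+$), so aside from packaging the $L^\infty$ conclusion as $v_-\leqslant v\leqslant v_+$ rather than via $\psi=u+v$, the argument is the same.
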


\begin{proof}
    Let $\omega^+_0 := \lambda_0^+ \widetilde{\omega}_0 $ and $\omega^-_0 := \lambda_0^- \widetilde{\omega}_0 $ for $\lambda_0^+ >1$ and $0 < \lambda_0^- < 1$. Using \eqref{2v}, we define 
    \begin{equation}
    u := e^{-t} \int^t_0 e^s \log\left( \frac{\omega^n}{(\omega^{-})^n}\right),
    \end{equation}
    and similarly, 
        \begin{equation}
    v := e^{-t} \int^t_0 e^s \log\left( \frac{(\omega^+)^n}{\omega^n}\right),
    \end{equation}
    and 
        \begin{equation}
    \psi := e^{-t} \int^t_0 e^s \log\left( \frac{(\omega^{-})^n}{(\omega^{+})^n}\right).
    \end{equation}
Note that these function are related by 
\begin{equation}\label{2uvpsi}
    \psi = u + v.
\end{equation}
Using a maximum principle argument, we can derive a lower bound for $u$ and $v$. We choose $\lambda_0$ small enough so that $\omega > \omega_0^-$. Then \eqref{2Cohomology} implies that 
\begin{equation}\label{2est}
        \omega(t) \geqslant \omega^-(t) + \sqrt{-1} \partial \bar{\partial} u.
\end{equation}
For each $t>0$, at a minimum of $u$  denoted as $u|_{\text{min}}$, we have from \eqref{2est} the estimate 
\begin{equation}
    \begin{aligned}
        \pdv{u}{t} \geqslant \log(1)  - u|_{\text{min}} = - u_{\text{min}}.
    \end{aligned}
\end{equation}
The above inequality implies
$$
    \pdv{}{t} \left( e^t u|_{\text{min}} \right) \geqslant 0.
$$
That is, the quantity in the brackets above is increasing by $t$. Therefore, it is bounded from below by its value at $t=0$. Putting all this together yields the estimate 
\begin{equation}
   e^t u \geqslant e^t u_{\text{min}} \geqslant 0.
\end{equation}
Thus, we have  
$u \geqslant 0$ for all $t >0$. \\

The same argument holds for $v$ given we choose $\lambda_0^+$ large enough so that $\omega^+_0 > \omega_0$. Since $\omega_0^+ = (\lambda_0^+/\lambda_0^-)\omega_0^-$, Proposition \ref{2prop1} implies $|\psi|_{L^\infty(X)} \leqslant C$. Combining the above allows us to conclude 
$$
0 \leqslant u = \psi - v \leqslant C
$$
for some $C>0$. Once again, by \eqref{2uvpsi}, we have $|v|_{L^\infty(X)} \leqslant C$ for some $C>0$.\\

We now estimate the time derivatives. From \eqref{2Cohomology}, we have 
\begin{equation}\label{2heatu}
    \left(\pdv{}{t} - \Delta_{\omega(t)}\right) u = \log \left( \frac{\omega^n}{(\omega^-)^n} \right) - u + n + \tr_{\omega} \omega^- + e^{-t}\tr_{\omega}(\omega_0 - \omega^-_0).
    \end{equation}
 On the other hand, taking a second order time derivative of $u$ which satisfies \eqref{2MA}, we have 
 \begin{equation}\label{2heatu'}
      \left( \pdv{}{t}  - \Delta_{\omega}\right) \left( \pdv{u}{t} \right) = -\pdv{u}{t} + R^- - \tr_{\omega}(\Ric(\omega^-)) - \tr_{\omega} \omega^- - e^{-t} \tr_{\omega}(\omega_0 - \omega^-_0) + n.
 \end{equation}
Now let 
\begin{equation}
    Q:= \pdv{u}{t} -Au
\end{equation}
for some large $A>0$, that will be set later. Using \eqref{2heatu} and \eqref{2heatu'}, the evolution of $Q$ is given by 
$$
\begin{aligned}
    \left(\frac{\partial}{\partial t}-\Delta_\omega\right) Q & = - (A+1) \pdv{u}{t} + R^{-}-\operatorname{tr}_\omega \operatorname{Ric}\left(\omega^{-}\right)-(A+1) \operatorname{tr}_\omega \omega^{-} \\
    &-(A+1) e^{-t} \operatorname{tr}_\omega (\omega_0-\omega_0^-)-(A+1) \frac{\partial u}{\partial t}-(A+1) n \\
    & \leqslant-\left(A+1-C_0\right) \operatorname{tr}_\omega \omega^{-} -(A+1) \frac{\partial u}{\partial t} + C,
\end{aligned}
$$
where the inequality in the last line is obtained by applying \eqref{T1Assumption} to bound the curvature terms in the first line and noting that $\omega_0 > \omega_0^-$ to ignore the first term in the second line. Here, $C>0$ is a constant independent of time.  We now set $A= 2+C_0$ to force a negative sign on the first term. This term can thus be ignored in the estimate. Applying the maximum principle, we have $Q \leqslant C$ for some $C>0$ independent of time. Along with proposition \eqref{2prop2}, we obtain 
\begin{equation}
    \pdv{u}{t} \leqslant C.
\end{equation}
For the lower bound, we consider 
\begin{equation}
    Q:= \pdv{u}{t}+Au.  
\end{equation}
Similar to the above, 
$$
\begin{aligned}
\left(\frac{\partial}{\partial t}-\Delta_\omega\right) Q & =R^{-}(t)-\operatorname{tr}_\omega \operatorname{Ric}\left(\omega^{-}\right)+(A-1) \operatorname{tr}_\omega {\omega^-} \\
& +(A-1) e^{-t} \operatorname{tr}_\omega (\omega_0 -\omega_0^-)+(A-1) \frac{\partial u}{\partial t}+(A-1)n - (A+1)e^{-t} \log \left( \frac{\omega_0^n}{(\omega_0^-)^n} \right) \\
& \geqslant\left(C_0+1-A\right) n+(A-1) \frac{\partial u}{\partial t}+\left(A-C_0-1\right) \operatorname{tr}_\omega \omega^{-}.
\end{aligned}
$$
Choose $A = C_0+2$ and use \eqref{2MA} to obtain 
$$
\begin{aligned}
\left(\frac{\partial}{\partial t}-\Delta_\omega\right) Q & \geqslant-C+C \frac{\partial u}{\partial t}+\operatorname{tr}_\omega \omega^{-} \\
& \geqslant-C+C \frac{\partial u}{\partial t}+n\left(\frac{\left(\omega^{-}\right)^n}{\omega^n}\right)^{\frac{1}{n}} \\
& =-C+C \frac{\partial u}{\partial t}+n e^{-\frac{1}{n}\left(\frac{\partial u}{\partial t}+u\right)}.
\end{aligned}
$$
Applying a maximum principle argument gives us the lower bound 
$$ \pdv{u}{t} \geqslant -C$$
for some $C>0$ independent of time. The gives us all the bounds claimed in the Proposition. 
\end{proof}

\section{Trace Estimates}

We adapt the computations from \cite{sherman2013local} to our case. We first need the evolution of $\tr_{\omega} \widetilde{\omega}$ under the heat operator, where $\omega$ and $\widetilde{\omega}$ are two solutions to the Chern-Ricci flow. 

From this point onwards, we will assume that 
\begin{equation}\label{A!'}
    \omega_0 = \widetilde{\omega}_0 + \sqrt{-1} \partial \bar{\partial} \varphi_0
\end{equation}
for some $\varphi_0 \in C^\infty(X,\RR)$. Ideally, we would like to remove this assumption on the initial metric. Regardless, considering metrics in the same $\partial \bar{\partial}$ seem to be a natural assumption \cites{angella2023leafwise, zheng2017chern}. 

Next, we record some identities involving torsion. Since $\omega$ may not be K\"ahler, the Bianchi identities do not always hold. Instead, the symmetries of the curvature tensor are given by 
\begin{equation}\label{3RSym}
\begin{aligned}
& R_{i \bar{j} k \bar{l}}-R_{k \bar{j} \bar{i} \bar{l}}=-\nabla_{\bar{j}} T_{i k \bar{l}} \\
& R_{i \bar{j} k \bar{l}}-R_{i \bar{l} k \bar{j}}=-\nabla_i T_{\bar{j} \bar{l} k} \\
& R_{i \bar{j} k \bar{l}}-R_{k \bar{l} i \bar{j}}=-\nabla_{\bar{j}} T_{i k \bar{l}}-\nabla_k T_{\bar{j} \bar{l} i}=-\nabla_i T_{\bar{j} \bar{l} k}-\nabla_{\bar{l}} T_{i k \bar{j}} \\
& \nabla_p R_{i \bar{j} k \bar{l}}-\nabla_i R_{p \bar{j} k \bar{l}}=-T_{p i}{ }^r R_{r \bar{j} k \bar{l}} \\
& \nabla_{\bar{q}} R_{i \bar{j} k \bar{l}}-\nabla_{\bar{j}} R_{i \bar{q} k \bar{l}}=-T_{\bar{q} \bar{j}}{ }^{\bar{s}} R_{i \bar{s} k \bar{l}} .
\end{aligned}
\end{equation}

From \eqref{2Cohomology} and \eqref{A!'}, we have
\begin{equation}
    \partial \omega = \partial \widetilde{\omega}.
\end{equation}

From metric equivalence, 
\begin{equation}
    \nabla_k g_{i \bar{j}} = \partial_k g_{i \bar{j}} - \Gamma_{k i}^p g_{p \bar{j}}  = 0.
\end{equation}
Skew symmetrising gives us the torsion tensor since
\begin{equation}
    T_{ki \bar{j}} = T_{ki}^p g_{p \bar{j}} = (\Gamma_{ki}^p - \Gamma_{ik}^p) g_{p \bar{j}} =  \partial_k g_{i \bar{j}} - \partial_i g_{k \bar{j}} = (\partial \omega)_{k i \bar{j}}
\end{equation}
and thus  
\begin{equation}\label{3TorsionCnst}
    T_{ki \bar{j}} = \widetilde{T}_{ki \bar{j}}. 
\end{equation}

\begin{prop}
    Assuming \eqref{A!'}, then 
    \begin{equation}
        \left( \pdv{}{t} - \Delta_{\omega} \right) \log \tr_{\widetilde{\omega}}\omega \leqslant \frac{2}{(\operatorname{tr}_{\widetilde{\omega}} \omega)^2} \operatorname{Re}\left( g^{\bar{q} k}\widetilde{T}_{i k}^i \partial_{\bar{q}} \operatorname{tr}_{\widetilde{g}} g \right) + C_0 \left( \tr_{\omega} \widetilde{\omega} + \frac{\tr_{\omega} \widetilde{\omega}}{\tr_{\widetilde{\omega}} \omega} + 1. \right) 
    \end{equation}
    where $C_0>0$ is a bound depending on the curvature of $\widetilde{\omega}$.
\end{prop}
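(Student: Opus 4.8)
The computation is a Hermitian (Chern) analogue of the classical Aubin--Yau second-order estimate, so the plan is to differentiate $\operatorname{tr}_{\widetilde{\omega}}\omega$ along the two flows and track every place where the failure of the K\"ahler condition forces a torsion correction. First I would set up local coordinates at a point $p$ normal for $\widetilde{g}$ (so $\widetilde{g}_{i\bar j}=\delta_{ij}$, $d\widetilde{g}_{i\bar j}=0$ at $p$, which one may arrange since only first derivatives of $\widetilde g$ enter) and simultaneously diagonalise $g$. Then $\operatorname{tr}_{\widetilde{\omega}}\omega = \widetilde g^{\bar j i} g_{i\bar j}$, and I would compute $\partial_t \operatorname{tr}_{\widetilde g} g$ using the two flow equations \eqref{1CRF}: the $\partial_t g_{i\bar j}$ piece contributes $-R_{i\bar j}(\omega)-g_{i\bar j}$, and the $\partial_t \widetilde g^{\bar j i}$ piece contributes $\widetilde g^{\bar j k}\widetilde g^{\bar l i}(\widetilde R_{k\bar l}(\widetilde\omega)+\widetilde g_{k\bar l})$; the two $-g$/$+\widetilde g$ terms combine into a controlled $\pm(\operatorname{tr}_{\widetilde g}g)$-type contribution.

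Next I would compute $\Delta_\omega \operatorname{tr}_{\widetilde g} g = g^{\bar q p}\partial_p\partial_{\bar q}(\widetilde g^{\bar j i} g_{i\bar j})$. Expanding the derivatives produces: (i) a term $g^{\bar q p}\widetilde g^{\bar j i}\partial_p\partial_{\bar q} g_{i\bar j}$, which I rewrite via the Chern curvature of $\omega$ using $R_{i\bar j k\bar l}(\omega) = -\partial_k\partial_{\bar l} g_{i\bar j} + g^{\bar q p}\partial_k g_{i\bar q}\partial_{\bar l}g_{p\bar j}$, so the second-derivative-of-$g$ term becomes $-g^{\bar q p}\widetilde g^{\bar j i}R_{i\bar j p\bar q}(\omega)$ plus a manifestly nonnegative gradient term; (ii) terms $g^{\bar q p}(\partial_p\partial_{\bar q}\widetilde g^{\bar j i})g_{i\bar j}$ and cross terms $g^{\bar q p}\partial_p\widetilde g^{\bar j i}\,\partial_{\bar q}g_{i\bar j}$ (and conjugate), which are precisely where the curvature of $\widetilde\omega$ and the torsion $\widetilde T$ enter. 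For (i), I then want to replace $g^{\bar q p}\widetilde g^{\bar j i}R_{i\bar j p\bar q}(\omega)$ by $g^{\bar q p}\widetilde g^{\bar j i}R_{p\bar j i\bar q}(\omega)$ and commute indices using the curvature symmetry relations \eqref{3RSym}; each swap costs a first covariant derivative of $T$, but by \eqref{3TorsionCnst} $T=\widetilde T$, so $\nabla^\omega T$ differs from $\nabla^{\widetilde\omega}\widetilde T$ only by Christoffel-symbol (hence metric-equivalence) terms, and ultimately one trades $R_{i\bar j p\bar q}(\omega)$ for $\widetilde R_{i\bar j p\bar q}(\widetilde\omega)$-type quantities contracted against $g$, which are bounded by $C_0(\operatorname{tr}_\omega\widetilde\omega)$ by the curvature bound on $\widetilde\omega$. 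Collecting, the ``bad'' second-order contributions cancel against the nonnegative gradient term from (i) up to one leftover cross term, which is exactly the Cauchy--Schwarz-type remainder $\tfrac{2}{(\operatorname{tr}_{\widetilde g}g)^2}\operatorname{Re}\!\big(g^{\bar q k}\widetilde T_{ik}^{\ i}\partial_{\bar q}\operatorname{tr}_{\widetilde g}g\big)$ once one passes from $\operatorname{tr}_{\widetilde g}g$ to $\log\operatorname{tr}_{\widetilde g}g$ (the $\log$ produces the $-|\partial\operatorname{tr}_{\widetilde g}g|^2/(\operatorname{tr}_{\widetilde g}g)^2$ that absorbs the genuinely quadratic gradient terms, leaving only the linear-in-$\partial T$-free cross term).

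The main obstacle is the bookkeeping of the torsion terms arising from (a) the curvature-symmetry swaps in \eqref{3RSym} and (b) the mismatch between $\nabla^\omega$ and $\nabla^{\widetilde\omega}$ when comparing $R(\omega)$ and $\widetilde R(\widetilde\omega)$; one must check that, after using $T=\widetilde T$, every such term is either (i) quadratic in the gradient of $g$ and dominated by the $-|\partial\operatorname{tr}_{\widetilde g}g|^2/(\operatorname{tr}_{\widetilde g}g)^2$ term coming from $\log$, (ii) of the precise form of the stated cross term, or (iii) bounded pointwise by $C_0(\operatorname{tr}_\omega\widetilde\omega + \operatorname{tr}_\omega\widetilde\omega/\operatorname{tr}_{\widetilde\omega}\omega + 1)$ using the curvature bound on $\widetilde\omega$, metric equivalence (to convert $\operatorname{tr}_{\widetilde\omega}\omega$-weighted torsion norms into the allowed quantities), and the elementary inequality $\operatorname{tr}_{\widetilde\omega}\omega \cdot \operatorname{tr}_\omega\widetilde\omega \geqslant n$. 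Care is also needed with the term $e^{-t}\operatorname{tr}_\omega(\omega_0-\widetilde\omega_0) = e^{-t}\operatorname{tr}_\omega(\sqrt{-1}\partial\bar\partial\varphi_0)$ which is harmless (exponentially decaying, and $\varphi_0$ fixed) but should be noted. Once all terms are sorted into these three buckets, the inequality follows; I would present the torsion cancellation as the one genuinely new step relative to the K\"ahler computation of \cite{sherman2013local}.
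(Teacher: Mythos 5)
Your proposal is essentially the same argument as the paper's: expand $(\partial_t - \Delta_\omega)\log\operatorname{tr}_{\widetilde\omega}\omega$ using the Sherman--Weinkove/Aubin--Yau-type second-order computation, use the curvature symmetries \eqref{3RSym} together with $T=\widetilde T$ (from \eqref{3TorsionCnst}) to convert index mismatches into $\widetilde\nabla\widetilde T$ plus gradient-times-torsion cross terms, and absorb the genuinely quadratic gradient terms via Cherrier's inequality, leaving the single cross term and the remainder bounded by $\widetilde{\operatorname{Rm}}$. Two small points to tidy up: the term $e^{-t}\operatorname{tr}_\omega(\omega_0-\widetilde\omega_0)$ does not occur in the evolution of $\log\operatorname{tr}_{\widetilde\omega}\omega$ (it belongs to the $u$-evolution \eqref{2heatu}), and the phrase ``trades $R_{i\bar j p\bar q}(\omega)$ for $\widetilde R_{i\bar j p\bar q}(\widetilde\omega)$'' is better stated as: the two $\omega$-curvature contributions coming from $\Delta_\omega$ and from the flow's $\operatorname{Ric}(\omega)$ term cancel against each other up to $\nabla T$ corrections furnished by \eqref{3RSym}, and only $\widetilde R$-type terms (contracted with $\widetilde g\widetilde g g$, hence bounded by $C_0\operatorname{tr}_\omega\widetilde\omega$) remain.
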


\begin{proof}
From the calculations in \cite{sherman2013local} we have 
 \begin{equation}\label{3tr_xx}
\begin{aligned}
\Delta \operatorname{tr}_{\widetilde{\omega}} \omega &= g^{\bar{j} i} \widetilde{g}^{\bar{l} k} \left( \widetilde{\nabla}_{\bar{l}} \widetilde{\nabla}_k g_{i \bar{j}}+  \widetilde{\nabla}_{\bar{\ell}}\widetilde{T}_{i k \bar{j}} +\widetilde{\nabla}_i \widetilde{T}_{\bar{j}k \bar{l}} \right. \\
&  -\left(\widetilde{\nabla}_i \overline{\widetilde{T}_{j \ell}^q}-\widetilde{R}_{i \bar{\ell} p \bar{j}} \widetilde{g}^{\bar{q} p}\right) g_{k \bar{q}} -\left(\widetilde{\nabla}_{\bar{\ell}} \widetilde{T}_{i k}^p+\widetilde{R}_{i \bar{\ell} k \bar{q}} \widetilde{g}^{\bar{q} p}\right) g_{p \bar{j}} \\
& \left.-\overline{\widetilde{T}_{j \ell}^q} \widetilde{\nabla}_k g_{i \bar{q}} -\widetilde{T}_{i k}^p \widetilde{\nabla}_{\bar{\ell}} g_{p \bar{j}} - \overline{\widetilde{T}_{j \ell}^q}\widetilde{T}_{i k}^p\widetilde{g}_{p \bar{q}}+\widetilde{T}_{i k}^p \overline{\widetilde{T}_{j \ell}^q} g_{p \bar{q}}\right).
\end{aligned}
\end{equation}

The time derivative for the normalised flow is given by 
 \begin{equation}
\frac{\partial}{\partial t} \operatorname{tr}_{\widetilde{\omega}} \omega = \widetilde{R}^{\bar{j}i} g_{i \bar{j}} + \widetilde{g}^{\bar{\imath} k} \partial_k \partial_{\bar{\ell}} \log \operatorname{det}(g) = \widetilde{R}^{\bar{l}k} g_{i \bar{j}}+   g^{\bar{j} i} \widetilde{g}^{\bar{\ell} k} \partial_k \partial_{\bar{\ell}} g_{i \bar{j}}-g^{\bar{j} p} g^{\bar{q} i} \widetilde{g}^{\bar{\ell} k} \partial_k g_{\bar{i} j} \partial_{\bar{\ell}} g_{p \bar{q}}.
\end{equation}
Rewriting the above in terms of covariant derivatives gives us 
\begin{equation}\label{3tr_t}
    \frac{\partial}{\partial t} \operatorname{tr}_{\widetilde{\omega}} \omega = \widetilde{R}^{\bar{j}i} g_{i \bar{j}}+ g^{\bar{j} i} \widetilde{g}^{\bar{\ell} k} \widetilde{\nabla}_{\bar{\ell}} \widetilde{\nabla}_k g_{i \bar{j}}-g^{\bar{j} p} g^{\bar{q} i} \widetilde{g}^{\bar{\ell} k} \widetilde{\nabla}_k g_{i \bar{j}} \widetilde{\nabla}_{\bar{\ell}} g_{p \bar{q}}-\widetilde{g}^{\bar{\ell} k} \widetilde{g}^{\bar{j} i} \widetilde{R}_{k \bar{\ell} i \bar{j}}.
\end{equation}

Combining \eqref{3tr_xx} and \eqref{3tr_t}, we obtain 
\begin{equation}\label{3logevol'}
\begin{aligned}
        \left( \pdv{}{t} - \Delta_{\omega} \right) \log \tr_{\widetilde{\omega}} \omega &= \frac{1}{\tr_{\widetilde{\omega}} \omega} \left(   -g^{\bar{j} p} g^{\bar{q} i} \widetilde{g}^{\bar{\ell} k} \widetilde{\nabla}_k g_{i \bar{j}} \widetilde{\nabla}_{\bar{\ell}} g_{p \bar{q}} -\frac{1}{\operatorname{tr}_{\widetilde{g}} g} g^{\bar{\ell} k} \widetilde{\nabla}_k \operatorname{tr}_{\widetilde{g}} g \widetilde{\nabla}_{\bar{\ell}} \operatorname{tr}_{\widetilde{g}} g\right.\\ 
        & -2 \operatorname{Re}\left(g^{\bar{j} i} \widetilde{g}^{\bar{\ell} k} \widetilde{T}_{k i}^p \widetilde{\nabla}_{\bar{\ell}} g_{p \bar{j}}\right)-g^{\bar{j}} \widetilde{g}^{\bar{\ell} k} \widetilde{T}_{i k}^p \overline{\widetilde{T}_{j \ell}^q} g_{p \bar{q}} \\
        & + g^{\bar{j} i} \widetilde{g}^{\bar{\ell} k}\left(\widetilde{\nabla}_i \overline{\widetilde{T}_{j \ell}^q}-\widetilde{R}_{i \bar{\ell} p j} \widetilde{g}^{\bar{q} p}\right) g_{k \bar{q}} + \widetilde{g}^{\bar{l} k}\left(\widetilde{\nabla}_{\bar{\ell}} \widetilde{T}_{i k}^i+\widetilde{R}_{i \bar{l} k \bar{q}}  \widetilde{g}^{\bar{q} i}-\widetilde{R}_{k \bar{l} i \bar{q}} \widetilde{g}^{\bar{q} i}\right) \\
        & - \left. g^{\bar{j} i} \widetilde{g}^{\bar{l} k} \widetilde{\nabla}_{\bar{\ell}}\widetilde{T}_{i k \bar{j}} - g^{\bar{j} i} \widetilde{g}^{\bar{l} k} \widetilde{\nabla}_i \widetilde{T}_{\bar{j}k \bar{l}} + \widetilde{R}^{\bar{j}i} g_{i \bar{j}} \right).
\end{aligned}
\end{equation}
From the identity, we have 
$$
\widetilde{R}_{i \bar{\ell} k \bar{q}}-\widetilde{R}_{k \bar{\ell} i \bar{q}}=-\widetilde{g}_{j \bar{q}} \partial_{\bar{\ell}} \widetilde{T}_{i k}^j=-\widetilde{g}_{j \bar{q}} \widetilde{\nabla}_{\bar{\ell}} \widetilde{T}_{i k}^j
$$
which implies 
$$
\widetilde{g}^{\bar{e} k} \widetilde{g}^{\bar{q} i}\left(\widetilde{R}_{\bar{\ell} k \bar{q}}-\widetilde{R}_{k \bar{\ell} i \bar{q}}\right)=-\widetilde{g}^{\bar{e} k} \widetilde{g}^{\bar{q} i} \widetilde{g}_{j \bar{q}} \widetilde{\nabla}_{\bar{\ell}} \widetilde{T}_{i k}^j=-\widetilde{g}^{\bar{\ell} k} \widetilde{\nabla}_{\bar{\ell}} \widetilde{T}_{i k}^i.
$$
Rearranging gives us 
\begin{equation}\label{3zero}
\widetilde{g}^{\bar{\imath} k}\left(\widetilde{\nabla}_{\bar{\ell}} \widetilde{T}_{i k}^i+\widetilde{R}_{i \bar{\ell} k \bar{q}} \widetilde{g}^{\bar{q} i}-\widetilde{R}_{k \bar{\ell} i \bar{q}} \widetilde{g}^{\bar{q} i}\right)=0.
\end{equation}
Applying \eqref{3logevol'} and following Cherrier's argument on the remaining terms, we can find some $K \geqslant 0$ such that  
\begin{equation}\label{3logevol'}
\begin{aligned}
       \left( \pdv{}{t}  - \Delta_{\omega} \right) & \log \tr_{\widetilde{\omega}} \omega  = \frac{1}{\tr_{\widetilde{\omega}} \omega} \left( -K -2 \operatorname{Re}\left( g^{\bar{q} k}\widetilde{T}_{i k}^i \frac{\partial_{\bar{q}} \operatorname{tr}_{\widetilde{g}} g}{\operatorname{tr}_{\widetilde{g}} g}\right) \right.\\ 
        & \left. + g^{\bar{j} i} \widetilde{g}^{\bar{\ell} k}\left(\widetilde{\nabla}_i \overline{\widetilde{T}_{j \ell}^q}-\widetilde{R}_{i \bar{\ell} p j} \widetilde{g}^{\bar{q} p}\right) g_{k \bar{q}} - g^{\bar{j} i} \widetilde{g}^{\bar{l} k} \widetilde{\nabla}_{\bar{\ell}}\widetilde{T}_{i k \bar{j}} - g^{\bar{j} i} \widetilde{g}^{\bar{l} k} \widetilde{\nabla}_i \widetilde{T}_{\bar{j}k \bar{l}} + \widetilde{R}^{\bar{j}i} g_{i \bar{j}} \right).
\end{aligned}
\end{equation}
To progress, we need to estimate the second line in the above equation. Using \eqref{3RSym} and the bound on $\Rm(\widetilde{\omega})$, we have 
$$
g^{\bar{j} i} \widetilde{g}^{\bar{\ell} k}\left(\widetilde{\nabla}_i \overline{\widetilde{T}_{j \ell}^q}-\widetilde{R}_{i \bar{\ell} p j} \widetilde{g}^{\bar{q} p}\right) g_{k \bar{q}}  \leqslant C_0 \tr_{\omega} \widetilde{\omega} \tr_{\widetilde{\omega}} \omega 
$$
and 
$$
- g^{\bar{j} i} \widetilde{g}^{\bar{l} k} \widetilde{\nabla}_{\bar{\ell}}\widetilde{T}_{i k \bar{j}} - g^{\bar{j} i} \widetilde{g}^{\bar{l} k} \widetilde{\nabla}_i \widetilde{T}_{\bar{j}k \bar{l}} + \widetilde{R}^{\bar{j}i} g_{i \bar{j}} \leqslant \tr_{\omega} \widetilde{\omega} + \tr_{\widetilde{\omega}} \omega 
$$
from which the desired estimate follows. 
\end{proof}

\begin{lem}\label{3metricequiv}
    There exists $C>0$ such that 
    \begin{equation}
        C^{-1} \widetilde{\omega} \leqslant \omega \leqslant C \widetilde{\omega}
    \end{equation}
    for all $t \geqslant 0$. 
\end{lem}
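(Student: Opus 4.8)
The plan is to run a parabolic Schwarz-type (Aubin–Yau) argument on $\log \tr_{\widetilde\omega}\omega$ using the evolution inequality from the previous Proposition, with a carefully chosen auxiliary function that absorbs the torsion gradient term and the potential $v$. First I would control $\log\left(\omega^n/\widetilde\omega^n\right) = \partial_t v + v$ via Proposition \ref{2prop1}/\ref{2prop2}, which gives a uniform two-sided bound on the volume ratio, so that a bound on $\tr_{\widetilde\omega}\omega$ from above is equivalent (by the arithmetic–geometric mean inequality applied to the eigenvalues of $\omega$ with respect to $\widetilde\omega$) to a bound on $\tr_\omega\widetilde\omega$ from above, and both together give the full equivalence $C^{-1}\widetilde\omega \le \omega \le C\widetilde\omega$. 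So it suffices to bound $\tr_{\widetilde\omega}\omega$ from above on $X\times[0,\infty)$.

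Next I would form the test quantity $H := \log\tr_{\widetilde\omega}\omega - A v$ for a large constant $A>0$ to be chosen (using $v$ from \eqref{2v}, which is uniformly bounded by Proposition \ref{2prop2}), and compute $\left(\partial_t - \Delta_\omega\right)H$. From the Proposition we have
\begin{equation*}
    \left(\pdv{}{t} - \Delta_\omega\right)\log\tr_{\widetilde\omega}\omega \leqslant \frac{2}{(\tr_{\widetilde\omega}\omega)^2}\operatorname{Re}\left(g^{\bar q k}\widetilde T_{ik}^i\,\partial_{\bar q}\tr_{\widetilde g}g\right) + C_0\left(\tr_\omega\widetilde\omega + \frac{\tr_\omega\widetilde\omega}{\tr_{\widetilde\omega}\omega} + 1\right),
\end{equation*}
while $\left(\partial_t - \Delta_\omega\right)v = \log\left(\omega^n/\widetilde\omega^n\right) - v - n + \tr_\omega\widetilde\omega + (\text{bounded})$ by \eqref{2Cohomology} (this is the $\widetilde\omega$ analogue of \eqref{2heatu}, where the $e^{-t}\tr_\omega(\omega_0-\widetilde\omega_0) = e^{-t}\tr_\omega(\sqrt{-1}\partial\bar\partial\varphi_0)$ term is handled since $\omega_0-\widetilde\omega_0$ is a fixed form). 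Choosing $A > C_0$ makes the $-A\tr_\omega\widetilde\omega$ term dominate the $+C_0\tr_\omega\widetilde\omega$ term, producing a good negative term $-(A-C_0)\tr_\omega\widetilde\omega$ at a maximum of $H$. The remaining positive terms are the torsion gradient term and the $C_0\tr_\omega\widetilde\omega/\tr_{\widetilde\omega}\omega$ term; the latter is handled because $\tr_\omega\widetilde\omega \le (\tr_{\widetilde\omega}\omega)^{n-1}\cdot(\widetilde\omega^n/\omega^n)$, so at a point where $\tr_{\widetilde\omega}\omega$ is large this is comparable to $(\tr_\omega\widetilde\omega)$ times a bounded factor divided by something large, or else $\tr_{\widetilde\omega}\omega$ is already bounded and we are done.

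The main obstacle is the torsion gradient term $\frac{2}{(\tr_{\widetilde\omega}\omega)^2}\operatorname{Re}\left(g^{\bar q k}\widetilde T_{ik}^i\partial_{\bar q}\tr_{\widetilde g}g\right)$, which is precisely the term absent in the Kähler case and the reason the $\partial\bar\partial$-class hypothesis \eqref{A!'} is imposed. At an interior maximum of $H$ we have $\partial\log\tr_{\widetilde\omega}\omega = A\,\partial v$, i.e. $\partial_{\bar q}\tr_{\widetilde g}g = A\,(\tr_{\widetilde g}g)\,\partial_{\bar q}v$, so the troublesome term becomes $\frac{2A}{\tr_{\widetilde\omega}\omega}\operatorname{Re}\left(g^{\bar q k}\widetilde T_{ik}^i\partial_{\bar q}v\right)$; since $\widetilde T$ is uniformly bounded in $\widetilde\omega$ (by \eqref{T1Assumption}) and hence $|g^{\bar q k}\widetilde T_{ik}^i|$ is controlled by $(\tr_\omega\widetilde\omega)^{1/2}$ times a constant, Cauchy–Schwarz against $|\partial v|_\omega$ — which one bounds by a Calabi-type $C^3$ estimate or, more cheaply, by an interpolation/absorption into the $\Delta_\omega v$ and the good $-(A-C_0)\tr_\omega\widetilde\omega$ term after also using that at the maximum $|\partial v|^2_\omega$ is comparable to a bounded multiple of $\tr_\omega\widetilde\omega$ via the maximum condition — can be absorbed. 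Alternatively, following Sherman–Weinkove \cite{sherman2013local} and Cherrier, one adds a further multiple of a $C^1$-type quantity (or uses $\e$-Young) to swallow the torsion term into $-\e\,\tr_\omega\widetilde\omega$ plus a constant. Either way, at a maximum $H$ is bounded, hence $\log\tr_{\widetilde\omega}\omega \le Av + C \le C'$, and combined with the volume-ratio bound this yields the two-sided estimate; since the bound at $t=0$ is finite and the estimate is uniform in $t$, the conclusion holds for all $t\ge 0$.
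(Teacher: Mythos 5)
Your overall strategy matches the paper's: reduce to an upper bound on $\tr_{\widetilde\omega}\omega$, use the uniform volume-ratio bound from Proposition~\ref{2prop2} to upgrade to two-sided equivalence, and run a maximum-principle on $\log\tr_{\widetilde\omega}\omega$ corrected by (a multiple of) the potential $u^\sharp$ with $\omega = \widetilde\omega + \sqrt{-1}\partial\bar\partial u^\sharp$. You also correctly identify that the torsion gradient term $\frac{2}{(\tr_{\widetilde\omega}\omega)^2}\operatorname{Re}\bigl(g^{\bar q k}\widetilde T_{ik}^i\,\partial_{\bar q}\tr_{\widetilde g}g\bigr)$ is the essential new difficulty compared to the K\"ahler case, and that the $\partial\bar\partial$-class assumption is what makes it tractable.

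However, your primary test function $H = \log\tr_{\widetilde\omega}\omega - Av$ is not enough, and the argument you sketch for absorbing the torsion gradient term does not close. At the maximum you correctly get $\partial_{\bar q}\tr_{\widetilde g}g = A(\tr_{\widetilde g}g)\,\partial_{\bar q}v$, so the torsion term becomes $\frac{2A}{\tr_{\widetilde\omega}\omega}\operatorname{Re}\bigl(g^{\bar q k}\widetilde T_{ik}^i\,\partial_{\bar q}v\bigr)$, and after Cauchy--Schwarz you are left with a $|\partial v|_\omega$ factor that has nowhere to go: the evolution of $H$ produces no negative gradient term of $v$, the first-order condition does \emph{not} give $|\partial v|_\omega^2 \lesssim \tr_\omega\widetilde\omega$ (it only relates $\partial\tr$ to $\partial v$, not to the metric trace), and the terms $(\partial_t-\Delta_\omega)v$ contribute only zeroth-order quantities and $\tr_\omega\widetilde\omega$. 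Invoking a ``Calabi-type $C^3$ estimate'' to control $\partial v$ would be circular here since the Calabi estimate in Section~4 already presupposes the metric equivalence you are trying to prove.

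The mechanism you need is the one you mention only in passing at the end: a Phong--Sturm-type correction. The paper takes
\begin{equation*}
    Q := \log\tr_{\widetilde\omega}\omega - A u^\sharp + \frac{1}{u^\sharp + K},
\end{equation*}
with $K$ chosen so $u^\sharp + K \in [K/2,2K]$. The extra term $\frac{1}{u^\sharp+K}$ produces, under $(\partial_t-\Delta_\omega)$, a negative term proportional to $-|\nabla u^\sharp|_g^2$; after Cauchy--Schwarz on the torsion gradient term (and the harmless dichotomy $(\tr_{\widetilde\omega}\omega)^2 \gtrless A^2(u^\sharp+K)^3$), that negative term swallows the $|\partial u^\sharp|_g^2$ contribution, leaving only $(C-A)\tr_\omega\widetilde\omega + C$, whence the bound. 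So you have the right raw materials but the proposal as written leaves a genuine gap at exactly the step where this extra term is indispensable; you should make the Phong--Sturm correction part of the main argument rather than an alternative.
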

\begin{proof}
We first obtain the metric equivalence between $\omega$ and $\widetilde{\omega}$. To obtain a trace bound, we apply a modified argument of Phong-Strum \cite{phong2009dirichlet}. Let 
$$ u^\sharp = u = e^{-t}\varphi_0, $$
which is uniformly bounded due to Propsoition \ref{2prop2}. Modifying \eqref{2heatu} gives us 
\begin{equation}
    \left(\pdv{}{t} - \Delta_{\omega}\right)u^\sharp = \pdv{u^\sharp}{t} + n + tr_{\omega} \widetilde{\omega}. 
\end{equation}
Consider the following quantity 
    \begin{equation}
        Q:= \log \tr_{\widetilde{\omega}} \omega - A u^\sharp + \frac{1}{u^\sharp+K}
    \end{equation}
    where $A>0$ is a constant to be chosen later and $K>0$ is chosen such that $u+K \in [K/2,2K]$. At a maximum point of $Q$, we have 
    \begin{equation}
        \partial_q Q = \frac{\partial_q \tr_{\widetilde{\omega}}\omega}{\tr_{\widetilde{\omega}}\omega} - \left(A + \frac{1}{(u+K)^2} \right) \partial_q u = 0.
    \end{equation}
    At this point, the gradient term in \eqref{3logevol'} can be estimated as follows
    $$
    \begin{aligned}
        \frac{2}{(\tr_{\widetilde{\omega}} \omega)^2} & \left|\text{Re} \left( g^{\bar{q}k} {\widetilde{T}}^i_{ik} \partial_{\bar{q}} \tr_{\widetilde{\omega}} \omega \right)\right| \\
         & \leqslant 2\left(A + \frac{1}{(u+K)^2 } \right) \text{Re} \left( g^{\bar{q} k} \frac{\widetilde{T}_{ik}^i}{\tr_{\widetilde{\omega}} \omega} \partial_{\bar{q}} u \right) \\
        & \leqslant \frac{2}{u+K} \left( \left( \frac{A(u+K)+1}{\tr_{\widetilde{\omega}}\omega} \right) |\widetilde{T}|_g\right) \left( \frac{|\partial u|_g}{(u+K)}\right)  \\
        & \leqslant C A^2 (u+K)^3 \frac{|\widetilde{T}|_{\widetilde{g}} \tr_{\omega} \widetilde{\omega}}{(\tr_{\widetilde{\omega}} \omega)^2} + \frac{|\partial u|_g^2}{(u+K)^3} .
    \end{aligned}
    $$
    The second inequality was obtained using Cauchy-Schwartz with respect to $g$. The third inequality is Holder's inequality and the final inequality is obtained by the assumption on $\widetilde{T}$ and by assuming $(\tr_{\widetilde{\omega}} \omega)^2 \geqslant A^2(u+K)^3$. If this assumption does not hold, we would immediately yield a bound on $\tr_{\widetilde{\omega}} \omega$ by the uniform bound on $u^\sharp$.

    Utilising the above estimate and the evolution of $u$ given by \eqref{2heatu}, along with Proposition \ref{2prop2}, we have the estimate
    $$
    \begin{aligned}
            0 < \left( \pdv{}{t} - \Delta_{\omega} \right)Q & \leqslant (C - A)\tr_{\omega} \widetilde{\omega}  - \frac{|\nabla u|_g^2}{u+K} - 2 \frac{\pdv{u}{t}}{(u+K)^3} \\
            & \leqslant C + (C - A)\tr_{\omega} \widetilde{\omega},
    \end{aligned}
    $$
    at a maximum point of $Q$. Choosing $A>0$ large enough so that $C -A \leqslant-1$, we have 
    \begin{equation}\label{3tr1}
        \tr_{\omega} \widetilde{\omega} \leqslant C,
    \end{equation}
    at a maximum of $Q$, for some $C>0$. Then Proposition \ref{2prop2} implies 
    \begin{equation}\label{3tr2}
        \tr_{\widetilde{\omega}} \omega \leqslant n (\tr_{\omega} \widetilde{\omega})^{n-1} \frac{\text{det}g^{}}{\text{ det}\widetilde{g}} \leqslant C
    \end{equation}
    at a maximum point of $Q$. Unwrapping the definition of $Q$,  each individual term except $ \log \tr_{\omega} \widetilde{\omega}$ is uniformly bounded. Using \eqref{3tr2} then yields
    \begin{equation}\label{3trbdd}
        C^{-1} \widetilde{\omega} \leqslant \omega \leqslant C \widetilde{\omega}
    \end{equation}
    for some $C>0$ independent of time. 
\end{proof}

\section{Curvature Estimates}

Curvature estimates for $\omega(t)$ now follow from a standard maximum principle argument. The arguments here follow from \cite{sherman2013local} and are simpler since we do not need the localisation carried out there. For completeness, we outline the argument in this section. 

As an intermediate step, we will first need a Calabi-type estimate. Let 
\begin{equation}\label{3Sdef}
    S = |\Psi|_g^2 = |\widetilde{\nabla} g|_g^2.
\end{equation}
Then, we have the following estimate. 
\begin{lem}
The quantity $S$ can be bounded from above by 
    \begin{equation}
\left(\frac{\partial}{\partial t}-\Delta\right) S \leq C\left(S^{3 / 2}+1\right)-\frac{1}{2}\left(|\bar{\nabla} \Psi|^2+|\nabla \Psi|^2\right).
\end{equation}
\end{lem}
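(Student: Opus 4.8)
The plan is to run the parabolic Calabi computation of \cite{sherman2013local}, taking the second solution $\widetilde{\omega}(t)$ itself as the background metric. Write $\Psi^p_{ik} = g^{\bar q p}\widetilde{\nabla}_i g_{k\bar q} = \Gamma^p_{ik} - \widetilde{\Gamma}^p_{ik}$ for the tensor measuring the difference of the Chern connections of $\omega$ and $\widetilde{\omega}$, so that $S = |\Psi|_g^2 = |\widetilde{\nabla}g|_g^2$ as in \eqref{3Sdef}; throughout, $\Delta = \Delta_\omega$, $\nabla$ is the Chern connection of $\omega$, and $|\cdot| = |\cdot|_g$. Since $\nabla g = \nabla g^{-1} = 0$, the metric contractions in $S$ are parallel for $\nabla$, and the standard Bochner identity reads
\begin{equation*}
    \left( \pdv{}{t} - \Delta \right) S = 2\operatorname{Re} \left\langle \left( \pdv{}{t} - \Delta \right) \Psi, \Psi \right\rangle + \mathcal{E} - |\nabla\Psi|^2 - |\bar{\nabla}\Psi|^2 ,
\end{equation*}
where $\mathcal{E}$ gathers the reaction terms produced when $\partial_t$ hits those contractions. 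Using $\partial_t g^{\bar j i} = R^{\bar j i}(\omega) + g^{\bar j i}$ and the pointwise identity $R_{i\bar j}(\omega) = \widetilde{R}_{i\bar j} - \widetilde{\nabla}_{\bar j}\Psi^p_{ip}$ --- which follows from $R_{i\bar j}(\omega) = -\partial_i\partial_{\bar j}\log\det g$ and $\Psi^p_{ip} = \partial_i\log(\det g/\det\widetilde{g})$ --- one sees that $\mathcal{E}$ is built from the schematic shapes $\widetilde{R}\ast\Psi\ast\Psi$, $\bar{\nabla}\Psi\ast\Psi\ast\Psi$, and $\Psi\ast\Psi$.

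The core of the argument is the evolution of $\Psi$. Differentiating $\Psi = \widetilde{\nabla}g$ in time and using $\partial_t g = -\Ric(\omega) - \omega$ produces $\widetilde{\nabla}\Ric(\omega)$ at top order; on the spatial side, rewriting $\Delta = g^{\bar s r}\nabla_r\nabla_{\bar s}$ in terms of the background connection costs only terms $g^{\bar s r}\Psi\ast\widetilde{\nabla}_{\bar s}\Psi$, and commuting $\widetilde{\nabla}_r\widetilde{\nabla}_{\bar s}\Psi$ to $\widetilde{\nabla}_{\bar s}\widetilde{\nabla}_r\Psi$ produces, through the curvature symmetries \eqref{3RSym}, terms $\Rm(\widetilde{\omega})\ast\Psi$ and $\widetilde{\nabla}\widetilde{T}$. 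The decisive point, exactly as in Calabi's classical argument, is that the third-order term $\widetilde{\nabla}\widetilde{\nabla}\Psi$ coming from $\widetilde{\nabla}\Ric(\omega)$ cancels the leading term of $\Delta\Psi$. This works because $\Psi^p_{ik}$ is symmetric in $i\leftrightarrow k$ up to a \emph{bounded} tensor (a difference of torsions), since $T_{ik\bar j}(\omega) = \widetilde{T}_{ik\bar j}$ by \eqref{3TorsionCnst} and the metrics used to raise the free index are uniformly equivalent by Lemma \ref{3metricequiv}. After the cancellation, and after collecting the torsion corrections it and the commutations generate, $(\partial_t - \Delta)\Psi$ reduces to $\Rm(\widetilde{\omega})\ast\Psi + \widetilde{\nabla}\widetilde{T}$ (with coefficients controlled by \eqref{T1Assumption}), terms carrying one factor of $\widetilde{\nabla}\Psi$ with bounded coefficient, and a remainder that is at most quadratic in $\Psi$.

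It remains to pair with $\Psi$ and estimate. By Lemma \ref{3metricequiv} the $\widetilde{\omega}$-norms of $\Rm(\widetilde{\omega})$, $\widetilde{T}$, $\widetilde{\nabla}\widetilde{T}$ are comparable to their $\omega$-norms and are bounded uniformly in time by \eqref{T1Assumption}, together with the standard Shi-type parabolic derivative estimates for the Chern--Ricci flow $\widetilde{\omega}(t)$, which upgrade \eqref{T1Assumption} to a bound on $\widetilde{\nabla}\widetilde{T}$. Pairing $(\partial_t - \Delta)\Psi$ with $\Psi$, using these bounds and Cauchy--Schwarz to absorb the terms carrying a factor of $\widetilde{\nabla}\Psi$ (and the $\bar{\nabla}\Psi$-terms from $\mathcal{E}$) into a fraction of the Bochner negative term, and using the interpolation $S \le S^{3/2}+1$ to mop up the quadratic and lower-order contributions --- exactly as in the computation of \cite{sherman2013local} --- one is left with
\begin{equation*}
    \left( \pdv{}{t} - \Delta \right) S \leq C\left(S^{3/2}+1\right) - \tfrac{1}{2}\left( |\nabla\Psi|^2 + |\bar{\nabla}\Psi|^2 \right) .
\end{equation*}

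I expect the main difficulty to be the bookkeeping in the Calabi-type cancellation in the Hermitian setting: one must check that every torsion correction term generated by commuting covariant derivatives of $\widetilde{\omega}$ and by passing between the Chern connections of $\omega$ and $\widetilde{\omega}$ either cancels or reduces to one of the controlled shapes above, and in particular that no uncontrolled curvature of $\omega$ --- the very quantity we are ultimately trying to bound --- survives in $(\partial_t - \Delta)\Psi$. This is precisely where the hypothesis $\omega_0 = \widetilde{\omega}_0 + \sqrt{-1}\partial\bar{\partial}\varphi_0$ is used: through $\partial\omega = \partial\widetilde{\omega}$ and \eqref{3TorsionCnst} it forces the torsions of the two flows to coincide, so that the failure of $\Psi$ to be symmetric in its lower indices is a bounded tensor and the cancellation survives the passage to the non-K\"ahler setting.
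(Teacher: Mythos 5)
Your proposal is correct and follows essentially the same route as the paper: a parabolic Calabi-type computation built on the Bochner formula for $S=|\Psi|_g^2$, the cancellation of the top-order term in $(\partial_t-\Delta)\Psi$, the curvature symmetries \eqref{3RSym}, and the identification of torsions from \eqref{3TorsionCnst} together with the metric equivalence of Lemma \ref{3metricequiv}. Your explicit invocation of Shi-type derivative estimates for $\widetilde{\omega}(t)$ to control $\widetilde{\nabla}\widetilde{T}$ and $\widetilde{\nabla}\widetilde{\Rm}$ is a fair and slightly more careful account of a step the paper uses implicitly.
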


\begin{proof}

The time derivative is given by 
$$
\frac{\partial}{\partial t} \Psi_{i j}{ }^k=\frac{\partial}{\partial t} \Gamma_{i j}^k - \frac{\partial}{\partial t} \widetilde{\Gamma}_{i j}^k=-\nabla_i R_j{ }^k + \widetilde{\nabla}_i \widetilde{R}_j{ }^k .
$$

Then 
$$
\begin{aligned}
    \frac{\partial}{\partial t} S & =  \frac{\partial}{\partial t}\left(g^{\bar{a} i} g^{\bar{b} j} g_{k \bar{c}} \Psi_{i j}^k \overline{\Psi_{a b}{ }^c}\right) \\
    & =  R^{\bar{a} i} \Psi_{i j}{ }^k \Psi_{\bar{a}}{ }^j{ }_k+R^{\bar{b} j} \Psi_{i j}{ }^k \Psi^i{ }_{\bar{b} k}-R_{k \bar{c}} \Psi_{i j}{ }^k \Psi^{i j \bar{c}} + \Psi_{i j}{ }^k \Psi^{ij}{}_{k}\\
    & -2 \operatorname{Re}\left(\left(\nabla_i R_j{ }^k - \widetilde{\nabla}_i \widetilde{R}_j{ }^k\right) \Psi^{i j}{ }_k\right).
\end{aligned}
$$
Putting this together yields
$$
\begin{aligned}
\left(\frac{\partial}{\partial t}-\Delta\right) S & =  S -|\bar{\nabla} \Psi|^2-|\nabla \Psi|^2+\left(R^{\bar{r} i}{ }_p{ }^p-R_p{ }^{p \bar{r} i}\right) \Psi_{i j}{ }^k \Psi_{\bar{r}}{ }^j{ }_k+\left(R^{\bar{r} j}{ }_p{ }^p-R_p{ }^{p \bar{r} j}\right) \Psi_{i j}{ }^k \Psi^i{ }_{\bar{r} k} \\
& -\left(R_{k \bar{r} p}{ }^p-R_p{ }^p{ }_{k \bar{r}}\right) \Psi_{i j}{ }^k \Psi^{i j \bar{r}}-2 \operatorname{Re}\left[\left(\nabla_i R_j{ }^k{ }_p{ }^p - \widetilde{\nabla}_i \widetilde{R}_j{ }^k { }_p { }^p +\Delta \Psi_{i j}{ }^k\right) \Psi^{i j}{ }_k\right] .
\end{aligned}
$$
The difference in curvature is can be expressed in terms of torsion by \eqref{3RSym}. This leads to 
    \begin{equation}\label{3Stobound}
\begin{aligned}
 \left(\frac{\partial}{\partial t}-\Delta\right) S &  = S-|\bar{\nabla} \Psi|^2-|\nabla \Psi|^2 \\
& +\left(\nabla_r T_{\bar{q}}{ }^{i \bar{q}}+\nabla_{\bar{q}} T^{\bar{q}}{ }_r{ }^i\right) \Psi_{i j}{ }^k \Psi^{r j}{ }_k+\left(\nabla_r T_{\bar{q}}{ }^{j \bar{q}}+\nabla_{\bar{q}} T^{\bar{q}}{ }_r{ }^j\right) \Psi_{i j}{ }^k \Psi^{i r}{ }_k  \\
&  -\left(\nabla_k T_{\bar{q}}{ }^{r \bar{q}}+\nabla_{\bar{q}} T^{\bar{q}}{ }_k{ }^r\right) \Psi_{i j}{ }^k \Psi^{i j}{ }_r \\
& -2 \operatorname{Re}\left[\left(\nabla_i \nabla_j T^{p k}{ }_p+\nabla_i \nabla_{\bar{q}} T^{\bar{q}}{ }_j{ }^k-T_{i p}{ }^r R_r{ }^p{ }_j{ }^k- \widetilde{\nabla}_i \widetilde{R}_j{ }^k { }_p { }^p+g^{\bar{q} p} \nabla_p \widetilde{R}_{i \bar{q} j}{ }^k\right) \Psi^{i j}{ }_k\right].
\end{aligned}
\end{equation}
We want to bound the terms in the second, third and fourth lines above. 

Using \eqref{3TorsionCnst}, we have 
\begin{equation}\label{3NablaTorsion}
    \nabla_{\bar{a}} T_{i j}{ }^k=g^{\bar{j} k}\left(\widetilde{\nabla}_{\bar{a}} \widetilde{T}_{i \bar{l}}-\Psi_{\bar{a} \bar{l}}^{\bar{\imath}} \widetilde{T}_{i j \bar{r}}\right).
\end{equation}
The gradient of torsion is given by a symmetrisation of curvature,  \eqref{3RSym}. From the assumption \eqref{T1Assumption}, we obtain the bound 
\begin{equation}
    |\nabla_{\bar{a}} T_{i j}{ }^k| \leqslant C(S^\frac{3}{2}+1).
\end{equation}
Taking another covariant derivative of \eqref{3NablaTorsion} gives us 
$$
\begin{aligned}
\nabla_a \nabla_b T_{\overline{i j}}{ }^{\bar{k}}= & g^{\bar{k} l}\left(\nabla_a\left(\widetilde{\nabla}_b \widetilde{T}_{\overline{i j}l}-\Psi_{b l}{ }^r \widetilde{T}_{\overline{i j}r}\right)\right) \\
= & g^{\bar{k} l}\left(\widetilde{\nabla}_a \widetilde{\nabla}_b \widetilde{T}_{\overline{i j} l}-\Psi_{a b}{ }^r \widetilde{\nabla}_r \widetilde{T}_{\overline{i j l}}-\Psi_{a l}{ }^r \widetilde{\nabla}_b \widetilde{T}_{\overline{i j} r}\right. \\
& \left.-\left(\nabla_a \Psi_{b l}{ }^r\right) \widetilde{T}_{\overline{i j} r}-\Psi_{b l}{ }^r \widetilde{\nabla}_a \widetilde{T}_{\overline{i j} r}+\Psi_{b l}{ }^r \Psi_{a r}{ }^s \widetilde{T}_{\overline{ij}s}\right),
\end{aligned}
$$
and thus, the two terms of this type in \eqref{3Stobound} can be bounded by 
\begin{equation}
    \leqslant C(S+|\nabla \Psi|+|\overline{\nabla} \Psi|+1).
\end{equation}
The last three terms in \eqref{3Stobound} can be estimated as follows:
\begin{equation}
    |T_{i p}{ }^r R_r{ }^p{ }_j{ }^k| + |\widetilde{\nabla}_i \widetilde{R}_j{ }^k { }_p { }^p|= |g^{\bar{s} r} g^{\bar{q} p} \widetilde{T}_{i p \bar{s}}\left(\widetilde{R}_{r \bar{q} j}{ }^k-\nabla_{\bar{q}} \Psi_{r j}{ }^k\right)| + C \leqslant C(|\bar{\nabla} \Psi|+1)
\end{equation}
and finally 
\begin{equation}
    |\nabla_p \widetilde{R}_{i \bar{q} j}{ }^k| = |\widetilde{\nabla}_p \widetilde{R}_{i \bar{q} j}{ }^k-\Psi_{p i}{ }^r \widetilde{R}_{r \bar{q} j}{ }^k-\Psi_{p j}{ }^r \widetilde{R}_{i \bar{q} r}{ }^k+\Psi_{p r}{ }^k \widetilde{R}_{i \bar{q} j}{ }^r| \leqslant C\left(S^{1 / 2}+1\right).
\end{equation}
Combining all these yields 
\begin{equation}\label{3Sevolest}
\left(\frac{\partial}{\partial t}-\Delta\right) S \leq C\left(S^{3 / 2}+1\right)-\frac{1}{2}\left(|\bar{\nabla} \Psi|^2+|\nabla \Psi|^2\right).
\end{equation}
\end{proof}

\begin{lem}
    Let $S$ be defined as in \eqref{3Sdef}. There exist $C= C(\widetilde{T},\widetilde{R})$ such that 
    \begin{equation}
        S \leqslant C
    \end{equation}
    for all $t \geqslant 0$.
\end{lem}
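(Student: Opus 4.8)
The plan is to run a parabolic maximum principle on the quantity
$$
Q := \left( \operatorname{tr}_{\widetilde{\omega}}\omega + B \right) S
$$
for a sufficiently large constant $B>0$. Multiplying $S$ by $\operatorname{tr}_{\widetilde{\omega}}\omega$ is what makes the argument close: the right-hand side of the Calabi estimate \eqref{3Sevolest} carries the superlinear term $C S^{3/2}$ — a genuine non-K\"ahler contribution, arising from terms of schematic type $\nabla T \ast \Psi \ast \Psi$ in which $\nabla T$ is of size $|\Psi|+1$ rather than bounded — and a bare maximum principle for $S$ only returns $\tfrac12(|\bar\nabla\Psi|^2+|\nabla\Psi|^2)\leq C(S^{3/2}+1)$, which does not bound $S$. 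Coupling $S$ to the bounded quantity $\operatorname{tr}_{\widetilde{\omega}}\omega$ instead produces a term of size $-cS^{2}$, which dominates $S^{3/2}$.

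First I would record three preliminary facts. (i) By Lemma \ref{3metricequiv} both $\operatorname{tr}_{\widetilde{\omega}}\omega$ and $\operatorname{tr}_{\omega}\widetilde{\omega}$ are uniformly bounded, so $\operatorname{tr}_{\widetilde{\omega}}\omega+B$ lies between $B$ and a fixed constant. (ii) Revisiting the computation \eqref{3tr_xx}--\eqref{3tr_t} of the previous Proposition \emph{without} dividing by $\operatorname{tr}_{\widetilde{\omega}}\omega$: the single $\widetilde\nabla g$-quadratic term $-g^{\bar{j}p}g^{\bar{q}i}\widetilde g^{\bar\ell k}\widetilde\nabla_k g_{i\bar j}\widetilde\nabla_{\bar\ell}g_{p\bar q}$ equals $-\widetilde g^{\bar\ell k}g^{\bar q i}g_{p\bar r}\Psi_{ki}^{\,p}\overline{\Psi_{\ell q}^{\,r}}$, which by metric equivalence is $\leq -cS$; the $\widetilde\nabla g$-linear torsion terms are absorbed into $\varepsilon S+C$ by Cauchy--Schwarz together with \eqref{3TorsionCnst}; and every remaining term is bounded by $C$ using \eqref{T1Assumption}, \eqref{3RSym} and metric equivalence. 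Hence
$$
\left( \pdv{}{t} - \Delta_\omega \right) \operatorname{tr}_{\widetilde{\omega}}\omega \;\leq\; -cS + C .
$$
(iii) Since $\nabla_q \operatorname{tr}_{\widetilde{\omega}}\omega = \widetilde g^{\bar j i}\widetilde\nabla_q g_{i\bar j} = \widetilde g^{\bar j i}\Psi_{qi}^{\,p} g_{p\bar j}$, metric equivalence gives $|\nabla \operatorname{tr}_{\widetilde{\omega}}\omega|_\omega^{2} \leq C S$.

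Next I would compute, with $\Delta=\Delta_\omega$ as in \eqref{3Sevolest},
$$
\left( \pdv{}{t} - \Delta \right) Q = \left( \operatorname{tr}_{\widetilde{\omega}}\omega + B\right)\left( \pdv{}{t} - \Delta\right) S + S\left(\pdv{}{t}-\Delta\right)\operatorname{tr}_{\widetilde{\omega}}\omega - 2\operatorname{Re}\,\langle \nabla \operatorname{tr}_{\widetilde{\omega}}\omega, \nabla S\rangle_\omega ,
$$
and evaluate at a point where $Q$ attains its maximum over $X\times[0,T']$ with positive time coordinate. There $\nabla Q = 0$, so $(\operatorname{tr}_{\widetilde{\omega}}\omega+B)\nabla S = -S\,\nabla\operatorname{tr}_{\widetilde{\omega}}\omega$, whence the cross term equals $+\tfrac{2S}{\operatorname{tr}_{\widetilde{\omega}}\omega+B}|\nabla\operatorname{tr}_{\widetilde{\omega}}\omega|_\omega^{2} \leq \tfrac{2C}{B}S^{2}$. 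Substituting \eqref{3Sevolest} together with (i)--(iii) and discarding the favourable term $-\tfrac12(|\bar\nabla\Psi|^2+|\nabla\Psi|^2)$, one gets at that point
$$
0 \;\leq\; \left( \pdv{}{t} - \Delta \right) Q \;\leq\; CB\left(S^{3/2}+1\right) - cS^{2} + CS + \frac{2C}{B}S^{2}.
$$
Fixing $B$ large enough that $\tfrac{2C}{B}\leq \tfrac{c}{2}$, the term $-\tfrac{c}{2}S^{2}$ dominates once $S$ is large, so $S$, and hence $Q$, is bounded at this maximum point. As $Q$ is also bounded at $t=0$ (the metrics $\omega_0,\widetilde\omega_0$ being smooth on the compact $X$), the maximum principle gives $Q\leq C$ on $X\times[0,\infty)$, and dividing by $\operatorname{tr}_{\widetilde{\omega}}\omega+B\geq B$ yields $S\leq C$.

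The step I expect to be the main obstacle is fact (ii): one must re-examine the evolution of $\operatorname{tr}_{\widetilde{\omega}}\omega$ before the logarithm is taken in order to isolate the coercive term $-cS$, and then check at the maximum point that the cross term $\operatorname{Re}\langle\nabla\operatorname{tr}_{\widetilde{\omega}}\omega,\nabla S\rangle$ is controlled by $\tfrac{2C}{B}S^{2}$ with $C$ independent of $B$, so that taking $B$ large genuinely closes the estimate.
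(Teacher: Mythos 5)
Your proposal is correct and takes essentially the same route as the paper's proof: both combine the Calabi estimate \eqref{3Sevolest} with the coercive evolution of $\operatorname{tr}_{\widetilde{\omega}}\omega$ (the paper's \eqref{3Trevolest}, equivalent to your fact (ii) after absorbing $CS^{1/2}$) and close via a maximum principle on a coupling of $S$ with the bounded trace, producing a dominating $-cS^{2}$ term at the maximum. The only cosmetic difference is the choice of auxiliary quantity — you take $Q = (\operatorname{tr}_{\widetilde{\omega}}\omega + B)S$, whereas the paper takes $Q = \tfrac{S}{A - \operatorname{tr}_{\widetilde{\omega}}\omega} + B\operatorname{tr}_{\widetilde{\omega}}\omega$ — but the mechanism and the bookkeeping of the cross term via $|\nabla \operatorname{tr}_{\widetilde{\omega}}\omega|^2 \leqslant CS$ are the same.
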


\begin{proof}
From the evolution of the trace, we have 
$$
\begin{aligned}
& \left(\frac{\partial}{\partial t}-\Delta\right) \operatorname{tr}_{\widetilde{g}} g=-g^{\bar{j} p} g^{\bar{q} i} \widetilde{\nabla}_k g_{i \bar{j}} \widetilde{\nabla}^k g_{p \bar{q}}-2 \operatorname{Re}\left(g^{\bar{j} i} \widetilde{T}_{k i}^p \widetilde{\nabla}^k g_{p \bar{j}}\right) \\
& \quad+g^{\bar{j} i}\left(\widetilde{\nabla}_i \widetilde{T}_{\bar{j}}^{k \bar{q}}-\widetilde{R}_i^{k \bar{q}_{\bar{j}}}\right) g_{k \bar{q}}-g^{\bar{j} i}\left(\widetilde{\nabla}_i \widetilde{T}_{\bar{j} \bar{q}}^{\bar{q}}+\widetilde{\nabla}^k \widetilde{T}_{i k \bar{j}}\right) \\
& +g^{\bar{j} i} \widetilde{T}_{\bar{j}}^{k \bar{q}} \widetilde{T}_{i k}^p(\widetilde{g}-g)_{p \bar{q}} .
\end{aligned}
$$
Combining this with the bounds
\begin{equation}
\left|\nabla \operatorname{tr}_g g\right|^2 \leq C S
\end{equation}
and 
\begin{equation}\label{3nablaS}
    |\nabla S|^2=\left.\left.|\nabla| \Psi\right|^2|| \bar{\nabla}|\Psi|^2|\leq 2| \Psi\right|^2\left(|\nabla \Psi|^2+|\bar{\nabla} \Psi|^2\right) \leqslant 2 S\left(|\bar{\nabla} \Psi|^2+|\nabla \Psi|^2\right)
\end{equation}
we have the estimate 
\begin{equation}\label{3Trevolest}
    \left(\frac{\partial}{\partial t}-\Delta\right) \operatorname{tr}_{\widetilde{g}} g \leq-\frac{S}{C_0}+C\left(S^{1 / 2}+1\right).
\end{equation}
We will apply a maximum principle argument on 
\begin{equation}
Q:= \frac{S}{A-\operatorname{tr}_{\widetilde{g}} g}+B \operatorname{tr}_{\widetilde{g}} g
\end{equation}
where $A>0$ is chosen large enough so that $A - \tr_{\widetilde{\omega}} \omega \in [A/2,2A]$ and $B>0$ to be chosen later. At a maximum point of $Q$, we have 
\begin{equation}\label{3Q2max}
     0 = \nabla Q = \frac{\nabla S}{A- \tr_{\widetilde{\omega}} \omega} + \left( \frac{S}{(A-\tr_{\widetilde{\omega}} \omega)^2} -B \right) \nabla \tr_{\widetilde{\omega}} \omega.
\end{equation}
Combining \eqref{3Sevolest},\eqref{3Trevolest} and \eqref{3Q2max}, we have that at a maximum point of Q,
$$
\begin{aligned}
0 \leqslant \left(\frac{\partial}{\partial t}-\Delta\right) Q \leqslant & \left(-\frac{B}{2 C_0} +\frac{C}{A^2} +C \right)S+\left(-\frac{S}{4 A^2 C_0}+ \frac{C B}{A} \right)S+C B. \\
\end{aligned}
$$
The first bracket is negative for any fixed $B> 2CC_0(1+ A^{-2})$ and the second bracket can be assumed to be negative since otherwise, we would have 
$$
S < 4CC_0AB
$$
which would imply a uniform bound. Thus, we have 
$$\
S \leqslant C
$$
at a maximum of $Q$. Rearranging $Q$ and the uniform bounds on $\tr_{\widetilde{\omega}} \omega$ imply the claim. 
\end{proof}

Finally, we use the Calabi-type estimate to obtain curvature bounds. The evolution of the normalised Chern-Ricci flow can be estimated with the same bound as in the unormalised case in \cite{sherman2013local}. Thus, we have 
$$
\left(\frac{\partial}{\partial t}-\Delta\right)|\mathrm{Rm}|^2 \leqslant C\left(|\mathrm{Rm}|^3+|\mathrm{Rm}|+ |\nabla \Psi|^2+|\bar{\nabla} \Psi|^2-|\nabla \mathrm{Rm}|^2 + 1\right) .
$$
Furthermore, the bound on $S$ allows us to estimate its evolution by 
$$
\left(\frac{\partial}{\partial t}-\Delta\right) S \leqslant C-\frac{1}{2}\left(|\bar{\nabla} \Psi|^2+|\nabla \Psi|^2\right).
$$
Following a similar maximum principle as applied in the Calabi-type estimate, we let 
\begin{equation}
    Q:= \frac{|\mathrm{Rm}|^2}{A-S}+B S 
\end{equation}
for some $A >0$ such that $A-S \in [A/2,A]$ and $B>0$ to be determined later. 
At a maximum point of $Q$, 
$$ \nabla |\Rm|_g^2 = - B(A-S)\nabla S - \frac{|\mathrm{Rm}|^2}{A-S} \nabla S. $$
The parabolic evolution of $Q$ at this maximum point is given by 
$$
0 < \left(\frac{\partial}{\partial t}-\Delta\right) Q =  \frac{1}{A-S}\left(\frac{\partial}{\partial t}-\Delta\right)|\operatorname{Rm}|^2 + \left( \frac{|\operatorname{Rm}|^2}{(A-S)^2} + B \right) \left(\frac{\partial}{\partial t}-\Delta\right) S +\frac{2 B|\nabla S|^2}{A-S} .
$$
The last term above can be estimated by \eqref{3nablaS}. Substituting the estimates for the evolution of $|\Rm|^2_g$ and $S$ from above yields
$$
\begin{aligned}
    0 & < \frac{2C}{A}|\Rm|_g^3 - \frac{2}{A^2}\left(|\bar{\nabla} \Psi|^2 + |\nabla \Psi|^2 \right) |\Rm|_g^2 + \frac{4}{A^2}|\Rm|_g^2 \\
    & - \left(\frac{B}{2}  - \frac{4BC}{A} - \frac{2C}{A} \right)\left(|\bar{\nabla} \Psi|^2 + |\nabla \Psi|^2 \right) +BC.
\end{aligned}
$$
To treat the curvature terms, we note that 
\begin{equation}\label{3quadcurv}
    |\mathrm{Rm}|^2 \leqslant \left(|\bar{\nabla} \Psi|^2 + |\nabla \Psi|^2\right) + C
\end{equation}
and modify our estimate to 
$$
\begin{aligned}
    0 & < \left(\frac{2C}{A}- \frac{2}{A^2}|\Rm|_g \right) \left(|\bar{\nabla} \Psi|^2 + |\nabla \Psi|^2 \right) |\Rm|_g \\
    & - \left( B\left( \frac{1}{2}  - \frac{4C}{A}\right) - \frac{2C}{A} - \frac{4}{A^2}\right)\left(|\bar{\nabla} \Psi|^2 + |\nabla \Psi|^2 \right) +BC.
\end{aligned}
$$
If we fix $A>16C$ and $B > 8C/A + 16/A^2 $, then the second term above will be negative. We assume that the $(2C)/A- (2|\Rm|_g)/A^2 < - 1$, otherwise, $|\Rm|_g \leqslant A^2/2+ AC$ at a maximum point of $Q$. This in turn will lead to a uniform bound on $|\Rm|$. Putting this all together, we are left with 
$$
    \left(|\bar{\nabla} \Psi|^2 + |\nabla \Psi|^2 \right) |\Rm|_g < BC
$$
which again by \eqref{3quadcurv} yields
$$ |\Rm|_g^3 \leqslant C'$$
at the maximum point of $Q$ for some $C'>0$. The desired estimate now follows from the definition of $Q$ and the uniform bound of $S$. The torsion is bounded from \eqref{3TorsionCnst} and \eqref{3trbdd}.

\section{Relation to Classification of Complex Surfaces}

When $X$ is a complex surface, the question regarding independence of singularity type is related to the Kodaria classification of surfaces. The following behavior is conjectured by Tosatti and Weinkove in \cite{tosatti2021chern}. 

\begin{thm}\label{classification}
    Let $X$ be a compact complex surface with $K_X$ nef, and $\omega(t)$ a solution of the Chern-Ricci flow on $X$ starting at a Gauduchon metric $\omega_0$. Then the flow develops a type III singularity at infinity, 
    \begin{enumerate}
        \item in the case where $X$ is a minimal surface of general type,  if and only if $K_M$ is ample 
        \item (conjectured) in the case where $X$ is a minimal properly elliptic surface, if and only if the only singular fibers are multiples of a smooth fiber (i.e. of Kodaira type $m I_0, m \geqslant$ 2)
        \item in the case where $X$ is a K\"ahler Calabi-Yau surface, if and only if $M$ is finitely covered by a torus and $\omega_0$ is K\"ahler
        \item in the case where $X$ is Kodaira surface, never,
        \item (conjectured) in the case where $X$ is an Inoue surface, always.
    \end{enumerate}
\end{thm}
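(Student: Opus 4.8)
The plan is to reduce, class by class in the Kodaira classification, to a distinguished reference solution whose singularity type is already understood, and then to transport that information to arbitrary initial data using Theorem \ref{T1}. The organising principle is the dichotomy between $K_X$ big and $c_1(K_X)$ numerically trivial: in the first case $\int_X \omega(t)^2$ stays bounded away from $0$ and no collapsing occurs, while in the second the normalised flow collapses with $\int_X\omega(t)^2 \asymp e^{-2t}$, so that being type III forces the rescaled flow $e^{t}\omega(t)$ to have curvature tending to $0$. Throughout, one uses that the Chern--Ricci flow on a surface preserves the Gauduchon condition, since $\partial\bar\partial\Ric(\omega)=0$ forces $\partial_t(\partial\bar\partial\omega)=-\partial\bar\partial\omega$. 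The recurring difficulty -- and the reason two of the five cases stay conjectural -- is that Theorem \ref{T1} only compares solutions whose initial metrics differ by $\sqrt{-1}\partial\bar\partial\varphi_0$, whereas the statement ranges over all Gauduchon $\omega_0$, which need not share a Bott--Chern class.

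\emph{General type (case 1).} If $K_X$ is ample, the convergence theorem for the Chern--Ricci flow on surfaces with ample canonical bundle (\cite{tosatti2015evolution}) shows that the normalised flow from any Gauduchon metric converges exponentially to the K\"ahler--Einstein metric, so $\sup_X|\Rm(\omega(t))|_{\omega(t)} \le C$ and the singularity is type III. For the converse, suppose $\sup_X|\Rm(\omega(t))| \le C$; since $K_X$ is big the volumes $\int_X \omega(t)^2$ are bounded away from $0$, so by the local higher-order estimates for the Chern--Ricci flow (\cite{sherman2013local}) one extracts a smooth, non-degenerate subsequential limit $\omega_\infty > 0$ with $\Ric(\omega_\infty) = -\omega_\infty$; such an $\omega_\infty$ is a positive representative of $c_1(K_X)$, forcing $K_X$ ample. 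Equivalently, a $(-2)$-curve $E$ would have area $\int_E\omega(t) \to 0$ against the fixed topological quantity $\int_E(\text{Gauss curvature})=4\pi$, forcing the bisectional curvature to blow up near $E$.

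\emph{K\"ahler Calabi--Yau and Kodaira surfaces (cases 3--4).} Here $c_1(K_X)$ is numerically trivial. If $X$ is finitely covered by a torus and $\omega_0$ is K\"ahler, the K\"ahler class of $\omega_0$ contains a flat Calabi--Yau metric $\omega_\flat$; the solution with initial data $\omega_\flat$ is $e^{-t}\omega_\flat$, with torsion and curvature identically $0$, and $\omega_0 = \omega_\flat + \sqrt{-1}\partial\bar\partial\varphi_0$, so Theorem \ref{T1} yields a uniform curvature bound for the flow from $\omega_0$: type III. Conversely, on a $K3$ or Enriques surface the Ricci-flat representative $\omega_{CY}$ of a K\"ahler class is not flat, so the solution $e^{-t}\omega_{CY}$ has $|\Rm|^2_{e^{-t}\omega_{CY}} = e^{2t}|\Rm(\omega_{CY})|^2 \to \infty$ -- not type III -- and Theorem \ref{T1} in contrapositive transports this to every K\"ahler metric in that class. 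For Kodaira surfaces, and for non-K\"ahler Gauduchon data on torus quotients, one instead rescales by $e^{t}$: a uniform bound $|\Rm(\omega(t))| \le C$ would make $|\Rm(e^{t}\omega(t))| \to 0$ with volume bounded, so the rescaled (still Gauduchon) flow subconverges, after passing to the universal cover if necessary, to a flat Gauduchon metric compatible with the behaviour of $\partial\omega(t)=e^{-t}\partial\omega_0$; this is impossible on $K3$ or Enriques surfaces ($\pi_1$ obstructs a flat metric), on Kodaira surfaces (which, by their structure as quotients of complex Lie groups, admit no flat Gauduchon metric), and on a torus with non-K\"ahler $\omega_0$ (the rescaled form has $\partial(e^{t}\omega(t))=\partial\omega_0\neq 0$, so cannot converge to the flat K\"ahler limit). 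Hence never type III in these cases.

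\emph{The conjectural cases and the main obstacle.} For minimal properly elliptic surfaces (case 2) and Inoue surfaces (case 5) the expected behaviour is currently accessible only through the K\"ahler--Ricci flow on elliptic surfaces (\cite{song2006thekahler}) and through the explicit semi-flat/solvmanifold models on Inoue surfaces conjectured in \cite{tosatti2021chern}: the flow collapses the elliptic fibers, resp. along the natural foliation, with bounded curvature precisely when the singular fibers are all of type $mI_0$, resp. always for Inoue surfaces. To promote these statements to all Gauduchon initial metrics one needs both the analogous interior curvature estimate for the genuinely non-K\"ahler flow and a bridge from an arbitrary Gauduchon metric to the model one -- either an extension of Theorem \ref{T1} beyond a fixed $\partial\bar\partial$-class or a direct collapsing estimate. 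This last point is the principal obstacle: Theorem \ref{T1} is an \emph{in-class} statement, whereas Theorem \ref{classification} demands genuine independence of the initial metric.
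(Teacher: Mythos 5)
The paper does not prove Theorem~\ref{classification}. It is stated verbatim (with items (2) and (5) explicitly marked ``(conjectured)'') as a quotation of the expected picture from Tosatti--Weinkove \cite{tosatti2021chern}, and the surrounding prose says only that ``the theorem above is complete except in cases (2)\ldots and (5),'' that partial progress for those cases appears in \cite{angella2023leafwise}, and that Theorem~\ref{T1} ``can be seen as a step towards completing'' it. There is therefore no proof in the paper for you to have reproduced; the theorem functions purely as motivation for Theorem~\ref{T1}.

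Your reconstruction of the known cases is broadly in line with the arguments in the literature, and your closing diagnosis---that Theorem~\ref{T1} is an in-class statement and so cannot by itself deliver full independence of the Gauduchon initial metric---is exactly the gap the author is pointing at. But a few details in your sketch are off or under-justified. In case~3 the phrase ``$\pi_1$ obstructs a flat metric'' on $K3$ does not parse, since $K3$ is simply connected; the obstruction there is $\chi(K3)=24\neq 0$ (equivalently $c_2\neq 0$), and for Enriques one should descend the $K3$ argument rather than invoke $\pi_1$. More seriously, the rescaling argument you use to dispose of non-K\"ahler Gauduchon initial data on torus quotients and on Kodaira surfaces is only a heuristic: extracting a flat Gauduchon limit from $e^{t}\omega(t)$ needs a Hermitian Cheeger--Gromov-type compactness theorem together with injectivity radius control, neither of which follows from a pointwise $|\Rm|\leqslant C$ bound alone, and the invariance $\partial(e^{t}\omega(t))=\partial\omega_0$ is not obviously preserved under the diffeomorphisms implicit in such convergence. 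Finally, your case~3 ``if'' direction uses Theorem~\ref{T1}, which is a result of this paper; that gives an alternative route to a fact already established in \cite{tosatti2021chern} without it, and is fine as an observation, but it should not be presented as the proof the classification theorem rests on. In short: you have supplied a plausible sketch for something the paper deliberately leaves unproved, correctly isolating the open obstacle, but the non-K\"ahler collapsing cases need genuine compactness input that your outline does not provide.
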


The theorem above is complete except in cases (2), for minimal properly elliptic surfaces, and (5), for Inoue surfaces. In these cases, progress has been made in \cite{angella2023leafwise} where the initial metric is in a $\partial \bar{\partial}$ class of an explicit metric constructed by Tricerri and Vaisman. The Tricerri and Vaisman metrics are constructed from the universal cover of $X$ and provide reference metrics from which analysis similar to that in Gross-Tosatti-Zhang \cite{gross1013collapsing} can be carried out. 

Theorem \ref{T1} can be seen as a step towards completing Theorem \ref{classification} especially due to evidence in \cite{angella2023leafwise}.

\bibliographystyle{plain}
\bibliography{ref}

\end{document}